\documentclass[12pt]{article}
\usepackage{amsmath, amsthm, amsfonts, amssymb}
\usepackage{mathrsfs}
\usepackage{bbm}
\usepackage{amssymb}
\usepackage{txfonts}
\usepackage{color}

\hfuzz2pt % Don't bother to report over-full boxes if over-edge is < 2pt
% Line spacing -----------------------------------------------------------
\newlength{\defbaselineskip}
\setlength{\topmargin}{.0cm} \voffset=-1cm \hoffset=-1cm
\oddsidemargin=1.6cm \evensidemargin=1.6cm
\setlength{\textwidth}{14.8 truecm}
\setlength{\textheight}{23truecm}
\setlength{\defbaselineskip}{\baselineskip}

\newcounter{marnote}

\newcommand{\setlinespacing}[1]%
           {\setlength{\baselineskip}{#1 \defbaselineskip}}

%%%%%%%%%%%%%%%%%%%%%%%%%%%%%%%%%%%%%%%%%%%%%%%%%%%%%%%%%%%%%%%%%%%%%%%%%%
% THEOREMS ---------------------------------------------------------------
\theoremstyle{plain}
\newtheorem{theorem}{Theorem}[section]
\newtheorem{corollary}[theorem]{Corollary}
\newtheorem{lemma}[theorem]{Lemma}
\newtheorem{prop}[theorem]{Proposition}
\theoremstyle{definition}

\theoremstyle{remark}
\newtheorem{remark}{Remark}[section]
\numberwithin{equation}{section}

%%% ----------------------------------------------------------------------
%\setlength{\tclineskip}{1.05\baselineskip}
%%% ----------------------------------------------------------------------
\begin{document}

% ------------------------------------------------------------------------

\title{Optimal estimates for the perfect conductivity problem with inclusions close to the boundary}

\author{Haigang
Li\footnote{School of Mathematical Sciences, Beijing Normal University, Laboratory of Mathematics and Complex Systems, Ministry of
   Education, Beijing 100875, China.}~~\footnote{Email: hgli@bnu.edu.cn. }~~
 and ~ Longjuan Xu\footnotemark[1]~~\footnote{Corresponding author. Email: ljxu@mail.bnu.edu.cn }}

\maketitle

\begin{abstract}
When a convex perfectly conducting inclusion is closely spaced to the boundary of the matrix domain, a ``bigger" convex domain containing the inclusion, the electric field can be arbitrary large. We establish both the pointwise upper bound and the lower bound of the gradient estimate for this perfect conductivity problem by using the energy method. These results give the optimal blow-up rates of electric field for conductors with arbitrary shape and in all dimensions. A particular case when a circular inclusion is close to the boundary of a circular matrix domain in dimension two is studied earlier by Ammari, Kang, Lee, Lee and Lim (2007). From the view of methodology, the technique we develop in this paper is significantly different from the previous one restricted to the circular case, which allows us further investigate the general elliptic equations with divergence form.
\end{abstract}

\section{Introduction and main results}

It is well known that in high-contrast fiber-reinforced composites high concentration of extreme electric field or mechanical loads will cause failure initiation in zones, which are created by extreme loads amplified by composite microstructure, including the narrow regions between two adjacent inclusions and the thin gaps between the inclusions and the matrix boundary. The main purpose of this paper is to study the blow-up estimate of $|\nabla{u}|$ where the high concentration of electric field is created. Note that the anti-plane shear model is consistent with the two-dimensional conductivity model. Thus, the blow-up analysis for electric field have a valuable meaning in relation to in the failure analysis of composite material.

There have been many important works on the gradient estimates for the conductivity problem in the presence of inclusions. For two adjacent inclusions $D_{1}$ and $D_{2}$ with $\varepsilon$ apart, Keller \cite{k1} was the first to use analysis to estimate the effective properties of particle reinforced composites. Bonnetier and Vogelius \cite{bv} and Li and Vogelius \cite{lv} proved the uniform boundedness of $|\nabla{u}|$ regardless of $\varepsilon$ provided that the conductivities stay away from $0$ and $\infty$.
Li and Nirenberg \cite{ln} extended the results in \cite{lv} to general divergence form second order elliptic systems including systems of elasticity. This in particular answered in the affirmative the question naturally led to by the numerical indication by Babu\v{s}ka, Andersson, Smith, and Levin \cite{basl} for the boundedness of the strain tensor as $\varepsilon$ tends to 0.
On the other hand, in order to investigate the high-contrast conductivity problem, Ammari, Kang, and Lim \cite{akl} were the first to study the case of the close-to-touching regime of particles whose conductivities degenerate, a lower bound on $|\nabla u|$ was constructed there showing blow-up in both the perfectly conducting and insulating cases. This blow-up was proved to be of order $\varepsilon^{-1/2}$ in $\mathbb R^{2}$. In their subsequent work with H. Lee and J. Lee \cite{aklll} they established upper and lower bounds on the electric field for the close-to touching regime of two circular particles in $\mathbb R^{2}$ with degenerate conductivities. Another interesting case of a particle very close to the boundary is also considered and similar lower and upper bounds for $|\nabla u|$ are established. Subsequently, it has been proved by many mathematicians that for the two close-to-touching inclusions case the generic blow-up rate of $|\nabla{u}|$ blow-up is $\varepsilon^{-1/2}$ in two dimensions, $|\varepsilon\log\varepsilon|^{-1}$ in three dimensions, and $\varepsilon^{-1}$ in dimensions greater than four. See Bao, Li and Yin \cite{bly1,bly2}, as well as Lim and Yun \cite{ly}, Yun \cite{y1,y2}. Further, more detailed, characterizations of the singular behavior of gradient of $u$ have been obtained by Ammari, Ciraolo, Kang, Lee and Yun \cite{ackly}, Ammari, Kang, Lee, Lim and Zribi \cite{akllz}, Bonnetier and Triki \cite{bt1,bt2}, Gorb and Novikov \cite{gn} and Kang, Lim and Yun \cite{kly,kly2}.
For more related work on elliptic equations and systems from composites, see \cite{bll,bll2,bc,dongli,dongzhang,kly0,llby,m,y3} and the references therein. However, for the second case of a particle close to the boundary, to the best of our knowledge, there has not been any further result after \cite{aklll} on the investigation that how the boundary data effects the gradient of the solution until now.

Actually, in \cite{aklll}, Ammari, Kang, Lee,  Lee, and Lim also studied the case that a small disk $B_{r}\subset\mathbb{R}^{2}$, with $\infty$ conductivity, is close to the boundary of a big disk $B_{\rho}$ ($B_{\rho}\supset\,B_{r}$), for which the blow-up rate $\varepsilon^{-1/2}$ is established. Essentially two-dimensional potential theory techniques for circular domain are used in \cite{aklll}, and the authors point out the importance of the three-dimensional case. In this paper we make use of energy method to establish the optimal gradient estimates in all dimensions when a general convex inclusion is very close to the boundary of a ``bigger'' convex domain which contains the inclusion.

Before stating our results, we first describe the nature of our domain. Let $D$ be a bounded open set in $\mathbb{R}^{n}$ $(n\geq2)$, $D_{1}$ be a strictly convex open subset of $D$, both being of class $C^{2,\alpha}$ $(0<\alpha<1)$, and denote the distance $\mathrm{dist}(D_{1},\partial{D})=: \varepsilon>0$.
We further assume that the $C^{2,\alpha}$ norms of
$\partial{D}_{1}$ and  $\partial{D}$ are bounded by some constant independent of
$\varepsilon$.

Suppose that the conductivity of the inclusion $D_{1}$ degenerates to $\infty$; in other words, the inclusion is a perfect conductor. We consider the following conductivity problem\begin{equation}\label{equinfty2}
\begin{cases}
\Delta{u}=0,& \mbox{in}~D\setminus\overline{D}_{1},\\
u=C_{1},&\mbox{on}~\overline{D}_{1},\\
\int_{\partial{D}_{1}}\frac{\partial{u}}{\partial\nu}\big|_{+}=0,&\\
u=\varphi,&\mbox{on}~\partial{D},
\end{cases}
\end{equation}
where $\varphi\in{C}^{2}(\partial D)$, $C_{1}$ is some constant to be determined later, and
$$\frac{\partial{u}}{\partial\nu}\bigg|_{+}:=\lim_{\tau\rightarrow0}\frac{u(x+\nu\tau)-u(x)}{\tau}.$$
Here and throughout this paper $\nu$ is the outward unit normal to
the domain and the subscript $\pm$ indicates the limit from outside
and inside the domain, respectively. The existence, uniqueness and regularity of solutions to equation
\eqref{equinfty2} can be referred to the Appendix in \cite{bly1}, with a minor modification.

Throughout this paper, unless otherwise stated, $C$ denotes a constant, whose value may vary from line to line, depending only on $n,\kappa_{0},\kappa_{1}$ and an upper bound of the $C^{2,\alpha}$ norms of $\partial{D}_{1}$ and $\partial{D}$, but not on $\varepsilon$. Also, we call a constant having such dependence a {\it universal constant}.
Let $P\in\partial{D}$ be the nearest point to $D_{1}$. Let $\overline{PP_{1}}$ denote the shortest line segment between $\partial{D}$ and $\partial{D}_{1}$. Denote
\begin{equation}\label{def_rhon}
\rho_{n}(\varepsilon)=
\begin{cases}
\sqrt{\varepsilon},& n=2,\\
\frac{1}{|\mathrm{ln}\varepsilon|}, & n=3,\\
1, & n\geq4.
\end{cases}
\end{equation}
We have the following gradient estimates in all dimensions.

\begin{theorem}\label{thm1}
Let $D_{1}\subset{D}\subset\mathbb{R}^{n}$ ($n\geq2$) be defined as above. Let
$u\in{H}^{1}(D)\cap{C}^{1}(D\setminus\overline{D}_{1})$
be the solution to \eqref{equinfty2}. Then for $0<\varepsilon<1/2$, we have
\begin{align}\label{upperbound1}
|\nabla{u}(x)|
\leq\frac{C\rho_{n}(\varepsilon)\left|Q[\varphi]\right|}{\varepsilon+\mathrm{dist}^{2}(x,\overline{PP_{1}})}+C
\left(\frac{\mathrm{dist}(x,\overline{PP_{1}})}{\varepsilon+\mathrm{dist}^{2}(x,\overline{PP_{1}})}+1\right)\|\varphi\|_{C^{2}(\partial{D})},\quad\,x\in{D}\setminus{D}_{1},
\end{align}
and if $|Q[\varphi]|\geq\,c^{*}$ for some universal constant $c^{*}>0$, then
\begin{equation}\label{lowerbound1}
|\nabla{u}(x)|\geq
\frac{\rho_{n}(\varepsilon)\left|Q[\varphi]\right|}{C\varepsilon}, \qquad\,x\in\overline{PP_{1}},
\end{equation}
where
\begin{equation}\label{def_Qphi}
Q[\varphi]=\int_{\partial{D}_{1}}\frac{\partial{v}_{0}}{\partial\nu}
\end{equation}
is a bounded functional of $\varphi$, and $v_{0}\in{C}^{2}(D\setminus\overline{D}_{1})$ is uniquely determined by
\begin{equation}\label{equ_v0}
\begin{cases}
\Delta{v}_{0}=0,&\mbox{in}~D\setminus\overline{D}_{1},\\
v_{0}=0,&\mbox{on}~\partial{D}_{1},\\
v_{0}=\varphi(x)-\varphi(P\,),&\mbox{on}~\partial{D}.
\end{cases}
\end{equation}
\end{theorem}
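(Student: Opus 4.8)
The plan is to reduce both estimates to properties of a single capacity-type potential. Introduce $v_{1}\in C^{2}(D\setminus\overline{D}_{1})$ solving $\Delta v_{1}=0$ in $D\setminus\overline{D}_{1}$, $v_{1}=1$ on $\partial D_{1}$, $v_{1}=0$ on $\partial D$ (so $0\le v_{1}\le1$ by the maximum principle). Since $u=C_{1}$ on $\overline{D}_{1}$ and $u=\varphi$ on $\partial D$, one checks directly from the equations that $u=(C_{1}-\varphi(P))\,v_{1}+v_{0}+\varphi(P)$ in $D\setminus\overline{D}_{1}$, with $v_{0}$ as in \eqref{equ_v0}; in particular $\nabla u=(C_{1}-\varphi(P))\nabla v_{1}+\nabla v_{0}$. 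Inserting this into the flux condition of \eqref{equinfty2} and using \eqref{def_Qphi} gives $(C_{1}-\varphi(P))\int_{\partial D_{1}}\partial_{\nu}v_{1}+Q[\varphi]=0$, while integrating $\Delta v_{1}=0$ against $v_{1}$ gives $\int_{\partial D_{1}}\partial_{\nu}v_{1}=-\int_{D\setminus\overline{D}_{1}}|\nabla v_{1}|^{2}$. Hence
\[
|C_{1}-\varphi(P)|=\frac{|Q[\varphi]|}{\int_{D\setminus\overline{D}_{1}}|\nabla v_{1}|^{2}},
\]
so it remains to (A) bound $|\nabla v_{1}|$ pointwise, (B) show $\int_{D\setminus\overline{D}_{1}}|\nabla v_{1}|^{2}$ is comparable to $\rho_{n}(\varepsilon)^{-1}$, and (C) bound $|\nabla v_{0}|$ by the second term in \eqref{upperbound1}.

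For (A) I would work in the thin gap near $P$. After a rigid motion placing $P$ at the origin with $\overline{PP_{1}}$ along the $x_{n}$-axis, strict convexity of $D_{1}$ and the $C^{2,\alpha}$ bounds give that over a small disk $\{|x'|<r_{0}\}$ the set $D\setminus\overline{D}_{1}$ is the slab $h(x')<x_{n}<h(x')+\delta(x')$ with $h(x')=O(|x'|^{2})$, $\delta(x')\sim\varepsilon+|x'|^{2}$, and $|x'|\sim\mathrm{dist}(x,\overline{PP_{1}})$ there. I would then compare $v_{1}$ with the interpolant $\bar v_{1}(x)=\delta(x')^{-1}(x_{n}-h(x'))$, which matches the boundary values of $v_{1}$ on the top and bottom faces of the slab: running the Bao--Li--Yin-type iteration of rescaled local energy estimates for $w:=v_{1}-\bar v_{1}$ on the cylinders $\{|x'-x_{0}'|<s\}$ with $s\sim\sqrt{\varepsilon+|x_{0}'|^{2}}$, one obtains $\|\nabla w\|_{L^{\infty}}\le C$, hence $|\nabla v_{1}(x)|\le C\,\delta(x')^{-1}\le C\big(\varepsilon+\mathrm{dist}^{2}(x,\overline{PP_{1}})\big)^{-1}$ in the gap, while $|\nabla v_{1}|\le C$ away from a fixed neighborhood of $P$ by standard boundary Schauder estimates (there $\partial D_{1}$ and $\partial D$ are separated by a universal constant). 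The very same comparison gives $|\partial_{n}v_{1}|\ge\delta(x')^{-1}-C$, hence $|\nabla v_{1}|\ge(2\varepsilon)^{-1}$ on $\overline{PP_{1}}$ once $\varepsilon$ is small.

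For (B) the upper bound follows by using a cutoff of $\bar v_{1}$ as a competitor in the Dirichlet energy:
\[
\int_{D\setminus\overline{D}_{1}}|\nabla v_{1}|^{2}\le\int|\nabla\bar v_{1}|^{2}\le C+C\int_{|x'|<r_{0}}\frac{dx'}{\varepsilon+|x'|^{2}}\le C\,\rho_{n}(\varepsilon)^{-1},
\]
the three cases $n=2$, $n=3$, $n\ge4$ being exactly the three behaviours of the last integral; the lower bound follows since, for each fixed small $x'$, Cauchy--Schwarz gives $1=\big(\int\partial_{n}v_{1}\,dx_{n}\big)^{2}\le\delta(x')\int|\partial_{n}v_{1}|^{2}\,dx_{n}$, and integrating in $x'$ yields $\int|\nabla v_{1}|^{2}\ge c\,\rho_{n}(\varepsilon)^{-1}$. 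Combined with the identity in (A) this gives $\rho_{n}(\varepsilon)|Q[\varphi]|/C\le|C_{1}-\varphi(P)|\le C\,\rho_{n}(\varepsilon)|Q[\varphi]|$.

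For (C) a barrier argument for $v_{0}$, now using $v_{0}=0$ on $\partial D_{1}$ and $|v_{0}|=|\varphi-\varphi(P)|\le C\,\mathrm{dist}(\cdot,\overline{PP_{1}})\,\|\varphi\|_{C^{2}}$ on $\partial D$ near $P$ (so $|v_{0}(x)|\le C|x'|\,\|\varphi\|_{C^{2}}$ in the gap), gives $|\nabla v_{0}(x)|\le C\big(\frac{\mathrm{dist}(x,\overline{PP_{1}})}{\varepsilon+\mathrm{dist}^{2}(x,\overline{PP_{1}})}+1\big)\|\varphi\|_{C^{2}}$, with $|\nabla v_{0}|\le C\|\varphi\|_{C^{2}}$ away from $P$. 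Then \eqref{upperbound1} follows from $|\nabla u|\le|C_{1}-\varphi(P)|\,|\nabla v_{1}|+|\nabla v_{0}|$ together with (A)--(C); and on $\overline{PP_{1}}$,
\[
|\nabla u|\ge|C_{1}-\varphi(P)|\,|\nabla v_{1}|-|\nabla v_{0}|\ge\frac{\rho_{n}(\varepsilon)|Q[\varphi]|}{C\varepsilon}-C\|\varphi\|_{C^{2}}\ge\frac{\rho_{n}(\varepsilon)|Q[\varphi]|}{C'\varepsilon},
\]
where the last step uses $|Q[\varphi]|\ge c^{*}$ and $\rho_{n}(\varepsilon)/\varepsilon\to\infty$, after normalizing $\|\varphi\|_{C^{2}}=1$ by linearity (the complementary range of $\varepsilon$ bounded away from $0$ being elementary). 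I expect the main obstacle to be step (A): the pointwise upper bound on $|\nabla v_{1}|$ through the local energy iteration, and, above all, pinning down the leading behaviour of $v_{1}$ in the gap precisely enough to obtain the matching lower bounds on $|\nabla v_{1}|$ along $\overline{PP_{1}}$ and on $\int|\nabla v_{1}|^{2}$ — everything else is bookkeeping around those estimates.
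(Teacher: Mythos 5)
Your proposal is correct and follows essentially the same route as the paper: the same decomposition $u=(C_{1}-\varphi(P))v_{1}+v_{0}+\varphi(P)$, the same flux identity giving $C_{1}-\varphi(P)=Q[\varphi]/\int_{\Omega}|\nabla v_{1}|^{2}$, the same interpolant $\bar v_{1}$ (the paper's $\bar u$) with the Bao--Li--Yin-type local energy iteration plus rescaled $W^{2,p}$ estimates to get $\|\nabla(v_{1}-\bar v_{1})\|_{L^{\infty}}\le C$, and the same assembly of the upper and lower bounds via $\int_{\Omega}|\nabla v_{1}|^{2}\sim\rho_{n}(\varepsilon)^{-1}$. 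The only minor deviations are cosmetic: you get the two-sided bound on the energy by an energy competitor and Cauchy--Schwarz rather than integrating the pointwise bounds, and your ``barrier argument'' for $|\nabla v_{0}|$ should be understood (as in the paper) as subtracting the boundary-adapted interpolant $(1-\bar v_{1})(\varphi(x',h(x'))-\varphi(P))$ and rerunning the same iteration, since an $L^{\infty}$ barrier bound alone does not yield the gradient estimate in the thin gap.
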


\begin{remark}\label{rem1}
If $\varphi=0$, then the solution of \eqref{equinfty2} is $u\equiv0$. On the other hand, by \eqref{equ_v0}, we have $v_{0}\equiv0$, so $Q[\varphi]=0$. Thus, Theorem \ref{thm1} is obvious. So we only need to prove  it for $\|\varphi\|_{C^{2}(\partial{D})}=1$ by considering $u/\|\varphi\|_{C^{2}(\partial{D})}$. Our result do not really need $D$ and $D_{1}$ to be strictly convex. In fact, our proof of Theorem \ref{thm1} applies to more general situations where $\partial{D}$ and $\partial{D}_{1}$ are relatively strictly convex in a neighborhood of $\overline{PP_{1}}$. Even
when they are not necessarily relatively convex near $P$ and $P_{1}$, while the distance between them remains to be $\varepsilon$, our method also can be applied; for more details, see discussions in Subsection \ref{subsec_general_domain}.
\end{remark}

\begin{remark}\label{rem2}
The upper bound in \eqref{upperbound1} is a pointwise estimate, which provides more information than that in \cite{bly1}. Moreover, the effect of the boundary data to the blow-up of $|\nabla{u}|$ is captured by \eqref{upperbound1} and \eqref{lowerbound1}. In this sense, we can regards \eqref{upperbound1} and \eqref{lowerbound1} as boundary estimates in relation to the interior estimates in \cite{bly1}, where two adjacent inclusions were considered. Furthermore, it turns out that the functional $Q[\varphi]$ plays an important role in the blow-up analysis. It is interesting to know when $|Q[\varphi]|\geq\,c^{*}$ for some positive universal constant $c^{*}$. A sufficient condition for the existence of  $c^{*}$ is given in Subsection \ref{sec_Qphi}.
\end{remark}

The approach developed in the proof of Theorem \ref{thm1} can be extended to study general elliptic equations with a divergence form. Let $n,D_{1},D,\varepsilon$ and $\varphi$ be the same as in Theorem \ref{thm1}, and let
$A_{ij}(x)\in C^{2}(D\setminus\overline{D}_{1})$ be $n\times{n}$ symmetric matrix functions and satisfy the uniform elliptic condition
\begin{equation}\label{ene01a}
\lambda|\xi|^{2}\leq A_{ij}(x)\xi_{i}\xi_{j}\leq\Lambda|\xi|^{2},\quad\forall\xi\in\mathbb R^{n}, \quad x\in D\setminus\overline{D}_{1},
\end{equation}
where $0<\lambda\leq\Lambda<+\infty$. We consider
\begin{equation}\label{equinftydiv}
\begin{cases}
\partial_{i}\left(A_{ij}(x)\partial_{j} u\right)=0,&\mbox{in}~D\setminus\overline{D}_{1},\\
u=C_{1},&\mbox{on}~\overline{D}_{1},\\
\int_{\partial{D}_{1}}(A_{ij}(x)\partial_{j}u)n_{i}\big|_{+}=0,&\\
u=\varphi,&\mbox{on}~\partial{D}.
\end{cases}
\end{equation}
Then

\begin{theorem}\label{thm2}
Let $D_{1}\subset{D}\subset\mathbb{R}^{n}$ ($n\geq2$) be defined as above. Let
$u\in{H}^{1}(D)\cap{C}^{1}(D\setminus\overline{D}_{1})$
be the solution to \eqref{equinftydiv}. Then for $0<\varepsilon<1/2$, we have
\eqref{upperbound1} and \eqref{lowerbound1} hold, where
$$Q[\varphi]=\int_{\partial{D}_{1}}\frac{\partial{V}_{0}}{\partial\nu}
$$
where $V_{0}$ is uniquely determined by
\begin{equation}\label{section3 equ_v0}
\begin{cases}
\partial_{i}\left(A_{ij}(x)\partial_{j}V_{0}\right)=0,&\mbox{in}~D\setminus\overline{D}_{1},\\
V_{0}=0,&\mbox{on}~\partial{D}_{1},\\
V_{0}=\varphi(x)-\varphi(P\,),&\mbox{on}~\partial{D}.
\end{cases}
\end{equation}
\end{theorem}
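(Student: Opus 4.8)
The plan is to replay, step for step, the energy-method proof of Theorem~\ref{thm1}, exploiting the fact that that argument uses only the variational (divergence) structure of the equation, Caccioppoli-type inequalities, and interior/boundary gradient estimates at unit scale — none of which is specific to the Laplacian. By Remark~\ref{rem1} we may assume $\|\varphi\|_{C^{2}(\partial D)}=1$. Set $d(x):=\mathrm{dist}(x,\overline{PP_{1}})$ and let $v_{1}\in C^{2}(D\setminus\overline{D}_{1})$ be the $A$-harmonic capacitary function, i.e.
\begin{equation*}
\partial_{i}(A_{ij}\partial_{j}v_{1})=0\ \text{ in }\ D\setminus\overline{D}_{1},\qquad v_{1}=1\ \text{ on }\ \partial D_{1},\qquad v_{1}=0\ \text{ on }\ \partial D .
\end{equation*}
Since $u=C_{1}$ on $\overline{D}_{1}$ and $u=\varphi$ on $\partial D$, one has the decomposition $u=(C_{1}-\varphi(P))\,v_{1}+V_{0}+\varphi(P)$, with $V_{0}$ as in \eqref{section3 equ_v0}; all of $v_{1},V_{0},u$ solve the same operator, so the estimate reduces to estimating $v_{1}$, $V_{0}$ and the scalar $C_{1}-\varphi(P)$.

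Next I would pin down $C_{1}$. Substituting the decomposition into the flux condition of \eqref{equinftydiv} and integrating by parts gives $|C_{1}-\varphi(P)|\,a_{11}=|Q[\varphi]|$, where $a_{11}:=\int_{D\setminus\overline{D}_{1}}A_{ij}\,\partial_{i}v_{1}\,\partial_{j}v_{1}$ is the energy of $v_{1}$. The quantitative input is the two-sided bound $a_{11}\sim\rho_{n}(\varepsilon)^{-1}$: the upper bound follows from the Dirichlet principle, testing the energy against an explicit function that carries the boundary data of $v_{1}$ and interpolates linearly across the narrow gap of width $\sim\varepsilon+|x'|^{2}$; the lower bound follows by discarding all of the energy integral except the slab over the gap and applying the Cauchy--Schwarz inequality in the normal variable. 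By the ellipticity \eqref{ene01a}, $a_{11}$ is comparable to $\int_{D\setminus\overline{D}_{1}}|\nabla v_{1}|^{2}$, so these computations are verbatim the ones for the Laplacian; likewise $|Q[\varphi]|\le C$, since $V_{0}$ has energy bounded independently of $\varepsilon$. Hence $|C_{1}-\varphi(P)|\sim\rho_{n}(\varepsilon)\,|Q[\varphi]|$.

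The heart of the proof — and the step I expect to be the main obstacle — is the pair of pointwise bounds
\begin{equation*}
|\nabla v_{1}(x)|\le\frac{C}{\varepsilon+d(x)^{2}},\qquad |\nabla V_{0}(x)|\le C\Bigl(\frac{d(x)}{\varepsilon+d(x)^{2}}+1\Bigr),\qquad x\in D\setminus D_{1},
\end{equation*}
now for the divergence-form operator. I would prove them by the same auxiliary-function-plus-iteration scheme used for Theorem~\ref{thm1}: first construct explicit comparison functions $\bar v_{1},\bar V_{0}$ carrying the prescribed boundary values and obeying the two displayed bounds by direct computation (this part is purely geometric and sees neither the coefficients nor the dimension); then $w:=v_{1}-\bar v_{1}$ (resp.\ $V_{0}-\bar V_{0}$) solves $\partial_{i}(A_{ij}\partial_{j}w)=-\partial_{i}(A_{ij}\partial_{j}\bar v_{1})$, whose right-hand side is controlled precisely because $A_{ij}\in C^{2}$ and $\bar v_{1}$ is explicit; finally, run the energy iteration on $C(t):=\int_{\Omega_{t}}A_{ij}\partial_{i}w\,\partial_{j}w$, where $\Omega_{t}$ is the part of the gap region within tangential distance $t$ of $\overline{PP_{1}}$, using a Caccioppoli inequality together with the parabolic geometry $\varepsilon+|x'|^{2}$ of the gap to obtain a closed differential inequality for $C(t)$, and deduce that $|\nabla w|$ stays bounded. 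The delicate point is to keep the constants in the Caccioppoli inequality and in the rescaled unit-scale $C^{1,\alpha}$ estimates independent of $\varepsilon$; this is exactly where the $\varepsilon$-independent bounds on $\|A_{ij}\|_{C^{2}}$ and on the $C^{2,\alpha}$ norms of $\partial D_{1},\partial D$ are used, along with a standard interior/boundary gradient estimate on the part of $D\setminus\overline{D}_{1}$ bounded away from the gap.

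Assembling the pieces finishes both inequalities. For \eqref{upperbound1},
\begin{equation*}
|\nabla u(x)|\le|C_{1}-\varphi(P)|\,|\nabla v_{1}(x)|+|\nabla V_{0}(x)|\le\frac{C\rho_{n}(\varepsilon)\,|Q[\varphi]|}{\varepsilon+d(x)^{2}}+C\Bigl(\frac{d(x)}{\varepsilon+d(x)^{2}}+1\Bigr),
\end{equation*}
which is \eqref{upperbound1} (with $\|\varphi\|_{C^{2}}=1$; the general case follows by scaling). For \eqref{lowerbound1}, on $\overline{PP_{1}}$ one has $d=0$, $|\nabla V_{0}|\le C$, and a matching lower bound $|\nabla v_{1}|\ge c/\varepsilon$ there — obtained from the comparison function $\bar v_{1}$ together with the smallness of $\nabla w$, the normal derivative of $v_{1}$ being essentially constant across the thin gap. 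Using $|Q[\varphi]|\ge c^{*}$, hence $|C_{1}-\varphi(P)|\ge c\,\rho_{n}(\varepsilon)|Q[\varphi]|$, we get on $\overline{PP_{1}}$
\begin{equation*}
|\nabla u|\ge|C_{1}-\varphi(P)|\,|\nabla v_{1}|-|\nabla V_{0}|\ge\frac{c\,\rho_{n}(\varepsilon)\,|Q[\varphi]|}{\varepsilon}-C\ge\frac{\rho_{n}(\varepsilon)\,|Q[\varphi]|}{C\varepsilon},
\end{equation*}
the last inequality holding because $\rho_{n}(\varepsilon)/\varepsilon\to\infty$ as $\varepsilon\to0$ and $|Q[\varphi]|\ge c^{*}>0$ (the remaining range of $\varepsilon$ being handled trivially). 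The only genuinely new work relative to Theorem~\ref{thm1} is ensuring the iteration constants in the third paragraph are uniform in $\varepsilon$ for variable $C^{2}$ coefficients; everything else is a transcription of the Laplacian argument with $\Delta$ replaced by $\partial_{i}(A_{ij}\partial_{j}\cdot)$.
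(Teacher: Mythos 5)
Your overall architecture (the decomposition $u=(C_{1}-\varphi(P))V_{1}+V_{0}+\varphi(P)$, the identity $C_{1}-\varphi(P)=Q[\varphi]/a_{11}$ from the flux condition, the two-sided bound $a_{11}\sim\rho_{n}(\varepsilon)^{-1}$, and pointwise bounds on $\nabla V_{1},\nabla V_{0}$) is the same as the paper's. But there is a genuine gap at precisely the step you describe as ``purely geometric and sees neither the coefficients'': for a variable symmetric matrix with $A_{ni}(P)\neq0$ for some $i<n$, the flat auxiliary function $\bar u(x)=\frac{x_{n}-h(x')}{\varepsilon+h_{1}(x')-h(x')}$ does \emph{not} produce a controllable right-hand side. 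Since $\partial_{x_{n}}\bar u=1/\delta(x')$, the cross terms $\partial_{x_{i}}\bigl(A_{in}\partial_{x_{n}}\bar u\bigr)$ and $\partial_{x_{n}}\bigl(A_{ni}\partial_{x_{i}}\bar u\bigr)$ contain contributions of the form $-A_{in}\,\partial_{x_{i}}(h_{1}-h)/\delta(x')^{2}\sim|x'|/\delta(x')^{2}$, which is far larger than the bound $C/(\varepsilon+|x'|^{2})$ of \eqref{nabla2u_bar} that drives the energy iteration (at $|x'|\sim\sqrt{\varepsilon}$ it is of order $\varepsilon^{-3/2}$ versus $\varepsilon^{-1}$); note this has nothing to do with the $C^{2}$ smoothness of $A_{ij}$, as it comes from the value $A_{in}(P)$ itself. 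Running your Caccioppoli-plus-iteration scheme with this larger source only yields $|\nabla(V_{1}-\bar u)|\lesssim(\varepsilon+|x'|^{2})^{-1/2}$, not the boundedness of $\nabla w$ that you assert and then invoke both for the claimed analogue of \eqref{nabla_w_2} and for the lower bound on $\overline{PP_{1}}$.

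This is exactly the new ingredient of the paper's Section \ref{extend}: the auxiliary function is not $\bar u$ but the coefficient-dependent $\tilde u$ of \eqref{utilde}, whose corrector, proportional to $\sum_{i}A_{ni}\partial_{x_{i}}(h_{1}-h)/(4A_{nn})$ multiplied by a quadratic profile in the normal variable vanishing on both boundary graphs, cancels the $|x'|/\delta^{2}$ terms and restores the bound \eqref{f_tilde}; only then does the iteration of Proposition \ref{prop1} transplant verbatim. So the ``only genuinely new work'' is not the uniformity of constants you flag, but the construction of $\tilde u$ itself; without it your third paragraph (and hence the lower-bound argument, which needs $|\nabla w|$ small compared with $1/\varepsilon$ via a proved estimate) does not go through as written. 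It is conceivable that the weaker bound $(\varepsilon+|x'|^{2})^{-1/2}$ could still be massaged into the statement of the theorem, but that is not argued in your proposal, and it would not give the clean analogue of \eqref{nabla_w_2} that you claim. Your treatment of $a_{11}$ by the Dirichlet principle and a Cauchy--Schwarz slab argument is a harmless variant of the paper's route through \eqref{nabla_v12} and Lemma \ref{lem_bly}.
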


In order to prove Theorem \ref{thm1}, we decompose the solution $u$ of
\eqref{equinfty2} as follows
\begin{equation}\label{decomposition_u}
u(x)=(C_{1}-\varphi(P\,))v_{1}(x)+v_{0}(x)+\varphi(P\,),\qquad~x\in\,D\setminus\overline{D}_{1},
\end{equation}
where $v_{1}\in{C}^{2}(D\setminus\overline{D}_{1})$ satisfies
\begin{equation}\label{equ_v1}
\begin{cases}
\Delta{v}_{1}=0,&\mbox{in}~D\setminus\overline{D}_{1},\\
v_{1}=1,&\mbox{on}~\partial{D}_{1},\\
v_{1}=0,&\mbox{on}~\partial{D}.
\end{cases}
\end{equation}
Then we have
\begin{equation}\label{decomposition_u2}
\nabla{u}(x)=(C_{1}-\varphi(P\,))\nabla{v}_{1}(x)+\nabla{v}_{0}(x),\qquad~x\in\,D\setminus\overline{D}_{1}.
\end{equation}
Since $u=C_{1}$ on $\partial{D}_{1}$ and $\|u\|_{H^{1}(\Omega)}\leq\,C$ (independent of $\varepsilon$), it follows from the trace embedding theorem that
\begin{equation}\label{C1C2}
|C_{1}|\leq\,C.
\end{equation}
Thus, the proof of Theorem \ref{thm1} is reduced to the estimate of $|\nabla{v}_{1}|$ and $|\nabla v_{0}|$.

Similarly, for Theorem \ref{thm2}, we define $V_{1}\in{C}^{2}(D\setminus\overline{D}_{1})$ satisfying
\begin{equation}\label{section3 equ_v1}
\begin{cases}
\partial_{i}\left(A_{ij}(x)\partial_{j}V_{1}\right)=0,&\mbox{in}~D\setminus\overline{D}_{1},\\
V_{1}=1,&\mbox{on}~\partial{D}_{1},\\
V_{1}=0,&\mbox{on}~\partial{D}.
\end{cases}
\end{equation}

The rest of this paper is organized as follows.  In section \ref{sec_thm1}, we mainly estimate $|\nabla{v}_{1}|$ and $|\nabla v_{0}|$. By constructing an auxiliary function $\bar{u}$, and proving the boundedness of $|\nabla(v_{1}-\bar{u})|$, we show  that $|\nabla\bar{u}|$ is actually the main term of $|\nabla{v}_{1}|$. By the same way, we obtain the estimate of $|\nabla v_{0}|$. Thus, the optimal gradient estimate is  established for convex inclusions in all dimensions. In section \ref{extend}, we give the main ingredients of the proof of Theorem \ref{thm2}. For general elliptic equations with a divergence form, we construct an auxiliary function $\tilde{u}$, associated with the coefficients $A_{ij}$, and then obtain the boundedness of $|\nabla(V_{1}-\tilde{u})|$ and the estimate of $|\nabla V_{0}|$.

\section{Proof of Theorem \ref{thm1}}\label{sec_thm1}

After a possible translation and rotation if necessary, we may assume without loss of generality that $D,D_{1}$ are two strictly convex domains, which satisfy the following:
$$D_{1}\subset\,D,\quad\,P_{1}=\left(0',\varepsilon\right)\in\partial{D}_{1},\quad\,P=\left(0',0\right)\in\partial{D},
\quad\mbox{and}\quad\mathrm{dist}(D_{1},\partial{D})=\varepsilon.$$
Denote $\Omega:=D\setminus\overline{D}_{1}$. Near the origin, we assume that there exists a universal constant $R_{0}$, independent of $\varepsilon$, such that $\partial{D}_{1}$ and $\partial{D}$ can be represented by the graph of
$$x_{n}=\varepsilon+h_{1}(x')\quad\mbox{and}~~x_{n}=h(x'),\quad\mbox{for}~~|x'|\leq\,2R_{0},
$$
respectively, where $h_{1}$ and $h$ satisfy
\begin{equation}\label{h1h0}
\varepsilon+h_{1}(x')>h(x'),\quad\mbox{for}~~|x'|<2R_{0},
\end{equation}
\begin{equation}\label{h1h1}
h_{1}(0')=h(0')=0,\quad\nabla_{x'}h_{1}(0')=\nabla_{x'}h(0')=0,
\end{equation}
\begin{equation}\label{h1h2}
\nabla^{2}_{x'}h_{1}(0')\geq\kappa_{0}I_{n-1}, \quad\nabla^{2}_{x'}h(0')\geq0, \quad\nabla^{2}_{x'}(h_{1}-h)(0')\geq\kappa_{1}I_{n-1},
\end{equation}
where $\kappa_{0},\kappa_{1}>0$, $I_{n-1}$ is the $(n-1)\times(n-1)$ identity matrix, and
\begin{equation}\label{h1h3}
\|h_{1}\|_{C^{2,\alpha}(B'_{2R_{0}})}+\|h\|_{C^{2,\alpha}(B'_{2R_{0}})}\leq{C},
\end{equation}
where $C$ is a universal constant. For $0<r\leq\,2R_{0}$, we denote
$$ \Omega_r:=\left\{(x',x_{n})\in \mathbb{R}^{n}~\big|~h(x')<x_{n}<\varepsilon+h_{1}(x'),~|x'|<r\right\},$$ and
\begin{equation}\label{delta_x}
\delta(x'):=\varepsilon+h_{1}(x')-h(x'),\qquad\forall~(x',x_{n})\in\Omega_{R_{0}}.
\end{equation}

\subsection{Outline of the proof of Theorem \ref{thm1}}

Most of the paper is devoted to these estimates. Now introduce a function $\bar{u}\in{C}^{2}(\mathbb{R}^{n})$, such that $\bar{u}=1$ on
$\partial{D}_{1}$, $\bar{u}=0$ on
$\partial{D}$,
\begin{align}\label{ubar}
\bar{u}(x)
=\frac{x_{n}-h(x')}{\varepsilon+h_{1}(x')-h(x')},\qquad~x\in\,\Omega_{R_{0}},
\end{align}
and
\begin{equation}\label{u_bar_outside}
\|\bar{u}\|_{C^{2}(\Omega\setminus \Omega_{R_{0}})}\leq\,C.
\end{equation}
Using the assumptions on $h_{1}$ and $h$, \eqref{h1h0}--\eqref{h1h3}, a direct calculation gives
\begin{equation}\label{nablau_bar}
\left|\partial_{x_{i}}\bar{u}(x)\right|\leq\frac{C|x'|}{\varepsilon+|x'|^{2}},\quad\,i=1,\cdots,n-1,\quad
\partial_{x_{n}}\bar{u}(x)=\frac{1}{\delta(x')},\quad~x\in\Omega_{R_{0}},
\end{equation}
where $\delta(x')$ is defined by \eqref{delta_x}. Then we have

\begin{prop}\label{prop1}
Assume the above, let $v_{1}, v_{0}\in{H}^1(\Omega)$ be the
weak solution of \eqref{equ_v1} and \eqref{equ_v0}. Then
\begin{equation}\label{nabla_w_i0}
\|\nabla(v_{1}-\bar{u})\|_{L^{\infty}(\Omega)}\leq\,C.
\end{equation}
and
\begin{equation}\label{v1+v2_bounded1}
|\nabla v_{0}(x)|\leq\,C\left(\frac{|x'|}{\varepsilon+|x'|^{2}}+1\right)\|\varphi\|_{C^{2}(\partial{D})},\qquad\,x\in\Omega.
\end{equation}
Consequently,
\begin{equation}\label{nabla_v1}
\frac{1}{C(\varepsilon+|x'|^{2})}\leq|\nabla{v}_{1}(x)|\leq\frac{C}{\varepsilon+|x'|^{2}},\quad\,x\in\Omega_{R_{0}},
\end{equation}
and
$$
|\nabla{v}_{1}(x)|\leq\,C,\quad\quad\ \,x\in \Omega\setminus \Omega_{R_{0}}.
$$
\end{prop}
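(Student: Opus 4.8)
The plan is to prove Proposition~\ref{prop1} via the energy method, treating the three assertions in sequence. \emph{Step 1: bound $\|\nabla(v_1-\bar u)\|_{L^\infty(\Omega)}$.} Set $w:=v_1-\bar u$. Since $v_1=\bar u=1$ on $\partial D_1$ and $v_1=\bar u=0$ on $\partial D$, the function $w$ vanishes on $\partial\Omega$, and $\Delta w=-\Delta\bar u$ in $\Omega$. The key point is that $\bar u$ is harmonic to leading order: using \eqref{ubar} and the structure \eqref{h1h0}--\eqref{h1h3}, a direct computation shows $|\Delta\bar u(x)|\le C/\delta(x')\le C/(\varepsilon+|x'|^2)$ on $\Omega_{R_0}$, and $|\Delta\bar u|\le C$ on $\Omega\setminus\Omega_{R_0}$ by \eqref{u_bar_outside}. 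I would then estimate the energy $\int_\Omega|\nabla w|^2$ by testing the equation against $w$ itself: $\int_\Omega|\nabla w|^2=\int_\Omega(\Delta\bar u)\,w$. The right-hand side is controlled by splitting the integral over the dyadic pieces $\Omega_{2^{-k}R_0}\setminus\Omega_{2^{-k-1}R_0}$ and using the Poincar\'e inequality $|w|\le C\delta(x')\|\partial_{x_n}w\|_{L^\infty}$-type bounds, or more simply by a Cauchy--Schwarz argument combined with the fact that $w$ itself vanishes on both boundary components of each strip. This yields $\|\nabla w\|_{L^2(\Omega)}\le C$, and then local elliptic estimates (interior and boundary Schauder/$W^{2,p}$ estimates on balls of radius comparable to $\delta(x')$, after rescaling $x_n$ by $\delta$) upgrade this to the pointwise bound \eqref{nabla_w_i0}. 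The rescaling is the standard device: on each small cylinder one blows up the narrow direction so the domain becomes of unit size, applies standard estimates there, and scales back; the gain from the narrowness of the region exactly compensates the loss in the scaling.

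\emph{Step 2: bound $|\nabla v_0|$.} Here $v_0=0$ on $\partial D_1$ and $v_0=\varphi-\varphi(P)$ on $\partial D$, with $\|\varphi\|_{C^2}=1$ by Remark~\ref{rem1}. Since $\varphi(P)-\varphi(P)=0$ and $\varphi$ is $C^2$, one has $|\varphi(x)-\varphi(P)|\le C|x-P|^2\le C|x'|^2$ near $P$ on $\partial D$ (using $\nabla_{x'}$ of the tangential part vanishes at $P$ to the extent the geometry allows, or at worst $|\varphi(x)-\varphi(P)|\le C|x'|$). I would build a competitor $\bar v_0$ that equals $0$ on $\partial D_1$, equals $\varphi-\varphi(P)$ on $\partial D$, is supported near $\overline{PP_1}$, and satisfies $|\nabla\bar v_0|\le C(|x'|/(\varepsilon+|x'|^2)+1)$ pointwise — e.g.\ $\bar v_0(x)=\bigl(\varphi(x',h(x'))-\varphi(P)\bigr)\cdot\frac{\varepsilon+h_1(x')-x_n}{\delta(x')}$ on $\Omega_{R_0}$, whose gradient one estimates exactly as for $\bar u$ but with the extra small factor coming from $\varphi(x',h(x'))-\varphi(P)=O(|x'|)$. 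Then $v_0-\bar v_0$ vanishes on $\partial\Omega$ and solves $\Delta(v_0-\bar v_0)=-\Delta\bar v_0$, and the same energy-plus-rescaling argument as in Step~1 gives $\|\nabla(v_0-\bar v_0)\|_{L^\infty}\le C$, hence \eqref{v1+v2_bounded1}.

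\emph{Step 3: the two-sided bound on $|\nabla v_1|$.} The upper bound in \eqref{nabla_v1} is immediate from Step~1 and \eqref{nablau_bar}: $|\nabla v_1|\le|\nabla\bar u|+|\nabla w|\le C/(\varepsilon+|x'|^2)$ on $\Omega_{R_0}$, and $|\nabla v_1|\le C$ off $\Omega_{R_0}$ using \eqref{u_bar_outside}. For the lower bound, on $\Omega_{R_0}$ one has $\partial_{x_n}v_1=\partial_{x_n}\bar u+\partial_{x_n}w=\frac{1}{\delta(x')}+O(1)$, and since $\delta(x')\le C(\varepsilon+|x'|^2)$ by \eqref{h1h2}, for $|x'|$ small (say $|x'|\le R_1$ for a suitable universal $R_1$) the main term $1/\delta(x')\ge 1/(C(\varepsilon+|x'|^2))$ dominates the bounded error, giving $|\nabla v_1|\ge|\partial_{x_n}v_1|\ge 1/(C(\varepsilon+|x'|^2))$. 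For $R_1\le|x'|\le R_0$ the claimed lower bound is just $|\nabla v_1|\ge 1/C$, which follows from a Harnack/maximum-principle argument (or a barrier) using that $v_1$ interpolates between $1$ and $0$ across a strip of bounded width.

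\emph{Main obstacle.} The delicate part is Step~1 — more precisely, extracting the \emph{pointwise} $L^\infty$ bound on $\nabla w$ from the $L^2$ energy bound, uniformly in $\varepsilon$. The subtlety is that the domain $\Omega$ degenerates as $\varepsilon\to0$, so one cannot apply global elliptic estimates directly; the remedy is the dyadic decomposition of $\Omega_{R_0}$ into strips where $|x'|\sim 2^{-k}$ together with the anisotropic rescaling $y'=x'/2^{-k}$, $y_n=(x_n-h(x'))/\delta(x')$ that flattens each strip to a unit-size domain with uniformly controlled (Schauder) coefficients, so that the standard estimates apply with constants independent of $k$ and $\varepsilon$; summing back and using $\delta(x')\sim\varepsilon+|x'|^2$ yields \eqref{nabla_w_i0}. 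Getting the bookkeeping of these scalings right — in particular that $\Delta\bar u$ is genuinely of size $1/\delta$ and not worse, which relies crucially on $h_1,h\in C^{2,\alpha}$ with $\varepsilon$-independent norms — is where the real work lies.
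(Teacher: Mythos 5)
Your overall architecture (the auxiliary function $\bar u$, the difference $w_1=v_1-\bar u$, a global energy bound, rescaling at scale $\delta(z')$, and the analogous scheme for $v_0$ with the competitor $(1-\bar u)\big(\varphi(x',h(x'))-\varphi(P)\big)$, which is exactly the paper's $\hat u$) coincides with the paper's. However, there is a genuine gap at the decisive step: you pass from the \emph{global} bound $\|\nabla w_1\|_{L^2(\Omega)}\le C$ directly to the pointwise bound via rescaled local elliptic estimates, asserting that ``the gain from the narrowness of the region exactly compensates the loss in the scaling.'' It does not. Under the correct (isotropic) rescaling $x'-z'=\delta y'$, $x_n=\delta y_n$ of the cell $\widehat{\Omega}_{\delta}(z')$, the rescaled energy is $\delta^{1-\frac n2}\|\nabla w_1\|_{L^2(\widehat{\Omega}_{\delta}(z'))}$ and the $W^{2,p}$--Sobolev step returns, after scaling back, exactly \eqref{AAA}; feeding only the global energy bound into \eqref{AAA} yields $|\nabla w_1|\lesssim\delta^{-n/2}$, which is no better than trivial. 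What is needed --- and what the paper's main technical step (Step 1.2) proves by a Caccioppoli-type iteration over cylinders of width $\sim\varepsilon$ (resp.\ $\sim|z'|^2$), using the thin-direction Poincar\'e inequality $\int|w_1|^2\le C\delta^2\int|\nabla w_1|^2$ --- is the local energy decay \eqref{energy_w_inomega_z1}, i.e.\ $\int_{\widehat{\Omega}_{\delta}(z')}|\nabla w_1|^2\le C\varepsilon^{n}$ for $|z'|\le\sqrt{\varepsilon}$ and $\le C|z'|^{2n}$ otherwise; this is of size $\delta^{n}$ and is precisely what cancels the prefactor $C/\delta$ in \eqref{AAA}. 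Your proposal contains no argument for this local smallness (a priori, nothing prevents the unit amount of global energy from concentrating in a single $\delta$-cell), so Step 1 --- and with it Step 2, which invokes ``the same energy-plus-rescaling argument'' --- is incomplete. An alternative bridge would be a barrier/maximum-principle bound of the form $|w_1|\le C\delta(x')$ (essentially the route of \cite{bly1}), but you do not propose that either.

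A second, related flaw: the rescaling described in your ``main obstacle'' paragraph, $y'=x'/2^{-k}$, $y_n=(x_n-h(x'))/\delta(x')$, is anisotropic; with $r=2^{-k}$ it turns the Laplacian into an operator with principal part comparable to $r^{-2}\Delta_{y'}+\delta^{-2}\partial_{y_ny_n}$, whose ellipticity ratio $(r/\delta)^2$ blows up (for instance like $1/\varepsilon$ when $r\sim\sqrt{\varepsilon}$), so standard Schauder or $W^{2,p}$ estimates are \emph{not} available with constants independent of $k$ and $\varepsilon$. The paper instead rescales isotropically by $\delta$ in all variables, which keeps the operator uniformly elliptic but makes the local energy estimate indispensable, as explained above. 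Minor points: plain Cauchy--Schwarz with $\|\Delta\bar u\|_{L^2(\Omega_{R_0})}$ fails in your energy step since that norm is not uniformly bounded; use $\|w_1\|_{L^{\infty}}\|\Delta\bar u\|_{L^1}$ (the paper's choice) or the weighted version you hint at. And in Step 3, the lower bound on the range $R_1\le|x'|\le R_0$ needs no Harnack argument: one may simply take $R_0$ smaller (universally) so that $1/\delta(x')$ dominates the additive constant coming from \eqref{nabla_w_i0}.
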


\begin{remark} Notice that \eqref{v1+v2_bounded1} shows that $|\nabla v_{0}|$ is bounded on the segment $\overline{PP_{1}}$, because of the fact $v_{0}(P_{1})=v_{0}(P\,)=0$. However, for $v_{1}(P_{1})=1$ and $v_{1}(P\,)=0$, $|\nabla{v}_{1}|\sim\varepsilon^{-1}$ on $\overline{PP_{1}}$. Actually, pointwise bound \eqref{nabla_v1} is an improvement of its counterpart in \cite{bly1}, where the maximal principle is the main tool. In order to obtain \eqref{nabla_v1}, we make use of energy method and iteration technology, which is essentially different to that used in \cite{bly1}.
\end{remark}

Proposition \ref{prop1} is the main ingredient in the proof of Theorem \ref{thm1}. The proof will be given in the next subsection.

Define
\begin{equation}\label{def_a_ij}
a_{11}:=\int_{\Omega}|\nabla{v}_{1}|^{2}dx.
\end{equation}
Then using \eqref{nabla_v1},
$$\frac{1}{C}\int_{\Omega}\frac{1}{(\varepsilon+|x'|^{2})^{2}}dx\leq\,a_{11}\leq\,C\int_{\Omega}\frac{1}{(\varepsilon+|x'|^{2})^{2}}dx.$$
By direct integration we obtain the following estimates for $a_{11}$, which is essentially the same as lemmas 2.5--2.7 in \cite{bly1}.
\begin{lemma}\label{lem_bly}(\cite{bly1}) For $n\geq2$,
\begin{align*}
\frac{1}{C\rho_{n}(\varepsilon)}&\leq\,a_{11}\leq\frac{C}{\rho_{n}(\varepsilon)},
\end{align*}
where $\rho_{n}(\varepsilon)$ is defined by \eqref{def_rhon}.
\end{lemma}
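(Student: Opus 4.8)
The plan is to use the pointwise two-sided bound for $|\nabla v_1|$ supplied by Proposition~\ref{prop1} to replace $a_{11}$ by the purely geometric quantity $\int_{\Omega}(\varepsilon+|x'|^{2})^{-2}\,dx$, and then to evaluate that integral by slicing the thin neck $\Omega_{R_0}$ and integrating in $x'$ in polar coordinates, treating $n=2$, $n=3$ and $n\ge4$ separately. This is exactly the ``direct integration'' alluded to after the statement, and it parallels Lemmas~2.5--2.7 of \cite{bly1}.

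First I would split $a_{11}=\int_{\Omega_{R_0}}|\nabla v_1|^{2}\,dx+\int_{\Omega\setminus\Omega_{R_0}}|\nabla v_1|^{2}\,dx$. On $\Omega\setminus\Omega_{R_0}$ the second assertion of Proposition~\ref{prop1} gives $|\nabla v_1|\le C$, and $|\Omega|\le C$ since $\Omega\subset D$ is bounded, so this term is $O(1)$. On $\Omega_{R_0}$ the bound \eqref{nabla_v1} yields
\[
\frac1C\int_{\Omega_{R_0}}\frac{dx}{(\varepsilon+|x'|^{2})^{2}}\ \le\ \int_{\Omega_{R_0}}|\nabla v_1|^{2}\,dx\ \le\ C\int_{\Omega_{R_0}}\frac{dx}{(\varepsilon+|x'|^{2})^{2}}.
\]
Once one checks (as below) that the right-hand integral is bounded below by a positive universal constant for all $0<\varepsilon<1/2$, the additive $O(1)$ is harmless, and the lemma reduces to showing $\frac{1}{C\rho_n(\varepsilon)}\le\int_{\Omega_{R_0}}(\varepsilon+|x'|^{2})^{-2}\,dx\le\frac{C}{\rho_n(\varepsilon)}$.

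Next I would apply Fubini in the vertical variable: since for $|x'|<R_0$ the fibre of $\Omega_{R_0}$ over $x'$ has length exactly $\delta(x')=\varepsilon+h_1(x')-h(x')$, one gets $\int_{\Omega_{R_0}}(\varepsilon+|x'|^2)^{-2}\,dx=\int_{|x'|<R_0}\delta(x')(\varepsilon+|x'|^2)^{-2}\,dx'$. From \eqref{h1h1}, \eqref{h1h2} and \eqref{h1h3}, after shrinking $R_0$ if necessary so that $\nabla^2_{x'}(h_1-h)\ge\frac{\kappa_1}{2}I_{n-1}$ on $B'_{R_0}$, a second-order Taylor expansion gives $\frac1C(\varepsilon+|x'|^2)\le\delta(x')\le C(\varepsilon+|x'|^2)$, so the integral is comparable to $\int_{|x'|<R_0}(\varepsilon+|x'|^2)^{-1}\,dx'\sim\int_0^{R_0}\frac{r^{n-2}}{\varepsilon+r^2}\,dr$.

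Finally I would evaluate this radial integral in each dimension: for $n=2$ it is $\varepsilon^{-1/2}\arctan(R_0\varepsilon^{-1/2})\sim\varepsilon^{-1/2}$; for $n=3$ it equals $\tfrac12\ln\!\big(1+R_0^{2}/\varepsilon\big)\sim|\ln\varepsilon|$; and for $n\ge4$ it is $\le\int_0^{R_0}r^{n-4}\,dr=O(1)$ from above and, from below, $\ge\int_{R_0/2}^{R_0}\frac{r^{n-2}}{\varepsilon+r^2}\,dr\ge c>0$ using $\varepsilon<1/2$. Comparing with the definition \eqref{def_rhon} of $\rho_n(\varepsilon)$ gives both inequalities. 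I do not anticipate a genuine obstacle: the computation is elementary, as the statement itself signals. The only points requiring some care are the two-sided comparison $\delta(x')\sim\varepsilon+|x'|^2$ (which is why $R_0$ may have to be shrunk, using continuity of the Hessian of $h_1-h$), and verifying that the comparison ``$\int_0^{R_0}\frac{r^{n-2}}{\varepsilon+r^2}\,dr\sim\rho_n(\varepsilon)^{-1}$'' holds uniformly in $\varepsilon\in(0,1/2)$ rather than merely in the limit $\varepsilon\to0$.
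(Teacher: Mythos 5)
Your proposal is correct and follows essentially the same route as the paper: reduce $a_{11}$ via the two-sided bound \eqref{nabla_v1} to the integral $\int(\varepsilon+|x'|^{2})^{-2}\,dx$ and then evaluate it by direct integration, as in Lemmas 2.5--2.7 of \cite{bly1}. You have simply written out the details (the splitting off of the $O(1)$ part of $\Omega\setminus\Omega_{R_0}$, the comparison $\delta(x')\sim\varepsilon+|x'|^{2}$, and the dimension-by-dimension radial integrals) that the paper leaves to the cited reference.
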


\begin{proof}[Proof of Theorem \ref{thm1}.]By the decomposition \eqref{decomposition_u} and the third line of \eqref{equinfty2}, we have
\begin{equation}\label{sysC1C2*}
(C_{1}-\varphi(P\,))\int_{\partial{D}_{1}}\frac{\partial{v}_{1}}{\partial\nu}+\int_{\partial{D}_{1}}\frac{\partial{v}_{0}}{\partial\nu}=0.
\end{equation}
Recalling the definition of $v_{1}$, we have
\begin{equation}\label{aij}
\int_{\partial{D}_{1}}\frac{\partial{v}_{1}}{\partial\nu}=\int_{\partial{D}_{1}}\frac{\partial{v}_{1}}{\partial\nu}v_{1}=-\int_{\Omega}|\nabla{v}_{1}|^{2}=-a_{11}.
\end{equation}
Hence
$$C_{1}-\varphi(P\,)=\frac{Q[\varphi]}{a_{11}}.$$
Thus
$$\nabla{u}=(C_{1}-\varphi(P\,))\nabla{v}_{1}+\nabla{v}_{0}=\frac{Q[\varphi]}{a_{11}}\nabla v_{1}+\nabla v_{0}.$$
Using the upper bounds of $|\nabla{v}_{1}|$ and $|\nabla{v}_{0}|$ in Proposition \ref{prop1} and Lemma \ref{lem_bly}, we obtain \eqref{upperbound1}. If $|Q[\varphi]|\geq\,c^{*}$, then using the lower bound of $|\nabla{v}_{1}|$ in \eqref{nabla_v1} and boundless of $|\nabla v_{0}|$ on the segment $\overline{PP_{1}}$, we have \eqref{lowerbound1} holds on the segment $\overline{PP_{1}}$. Thus, Theorem \ref{thm1} is proved.
\end{proof}

\subsection{Proof of Proposition \ref{prop1}}\label{subsec_prop1}

\begin{proof}
\noindent{\bf STEP 1.} Proof of \eqref{nabla_w_i0}.

Denote
\begin{equation}\label{def_w}
w_{1}:=v_{1}-\bar{u}.
\end{equation}
By the definition of $v_{1}$, \eqref{equ_v1}, and using \eqref{def_w}, we have
\begin{equation}\label{w20}
\left\{
\begin{aligned}
-\Delta{w}_{1}&=\Delta\bar{u}\quad\mbox{in}~\Omega,\\
w_{1}&=0\quad\ \ \mbox{on}~\partial \Omega.
\end{aligned}\right.
\end{equation}
Since
\begin{equation}\label{estimate_baru}
|\bar{u}|+|\nabla\bar{u}|+|\nabla^{2}\bar{u}|\leq\,C,\quad\mbox{in}~~ \Omega\setminus\Omega_{R_{0}/2},
\end{equation}
by the standard elliptic theory, we know that
\begin{equation}\label{nabla_w_out}
|w_{1}|+\left|\nabla{w}_{1}\right|\leq\,C,
\quad\mbox{in}~~ \Omega\setminus\Omega_{R_{0}}.
\end{equation}
Therefore, in order to show \eqref{nabla_w_i0}, we only need to prove
\begin{equation}\label{section2 energyw1}
\left\|\nabla{w}_{1}\right\|_{L^{\infty}(\Omega_{R_{0}})}\leq\,C.
\end{equation}
The rest proof of \eqref{section2 energyw1} is divided into three steps.

{\bf STEP 1.1.} Proof of boundedness of the energy in $\Omega$, that is,
\begin{equation}\label{1energy_w}
\int_{\Omega}\left|\nabla{w}_{1}\right|^{2}\leq\,C.
\end{equation}

Using the maximum principle, we have $0<v_{1}<1$ in $\Omega$, so that
\begin{equation}\label{w_bdd}
\|w_{1}\|_{L^{\infty}(\Omega)}\leq\,C.
\end{equation}
By a direct computation,
\begin{equation}\label{nabla2u_bar}
|\Delta\bar{u}(x)|\leq\frac{C}{\varepsilon+|x'|^{2}},\quad\,x\in\Omega_{R_{0}}.
\end{equation}
Multiplying the equation in \eqref{w20} by $w_{1}$ and integrating by parts, it follows from \eqref{estimate_baru} and \eqref{nabla2u_bar} that
\begin{align*}%\label{energy_w_1}
\int_{\Omega}|\nabla{w}_{1}|^{2}
=\int_{\Omega}w_{1}\left(\Delta\bar{u}\right)
\leq\,\|w_{1}\|_{L^{\infty}(\Omega)}\left(\int_{\Omega_{R_{0}}}|\Delta\bar{u}|+C\right)
\leq\,C.
\end{align*}
So \eqref{1energy_w} is proved.

{\bf STEP 1.2.} Proof of
\begin{equation}\label{energy_w_inomega_z1}
\int_{\widehat{\Omega}_{\delta}(z')}\left|\nabla{w}_{1}\right|^{2}dx\leq
\begin{cases}C\varepsilon^{n},&\mbox{if}~~|z'|\leq\sqrt{\varepsilon},\\
C|z'|^{2n},&\mbox{if}~~\sqrt{\varepsilon}<|z'|\leq\,R_{0},
\end{cases}
\end{equation}
where
\begin{equation*}
 \widehat{\Omega}_{t}(z'):=\left\{x\in \mathbb{R}^{n}~\big|~h(x')<x_{n}<\varepsilon+h_{1}(x'),~|x'-z'|<{t}\right\}.
\end{equation*}

The iteration scheme here we use is similar in spirit to that used in  \cite{bll,llby}. For $0<t<s<R_{0}$, let $\eta$ be a smooth cutoff function satisfying $\eta(x')=1$ if $|x'-z'|<t$, $\eta(x')=0$ if $|x'-z'|>s$, $0\leq\eta(x')\leq1$ if $t\leq|x'-z'|\leq\,s$, and $|\nabla_{x'}\eta(x')|\leq\frac{2}{s-t}$. Multiplying the equation in \eqref{w20} by $w_{1}\eta^{2}$ and integrating by parts
leads  to
\begin{align}\label{FsFt11}
\int_{\widehat{\Omega}_{t}(z')}|\nabla{w}_{1}|^{2}\leq\,\frac{C}{(s-t)^{2}}\int_{\widehat{\Omega}_{s}(z')}|w_{1}|^{2}
+(s-t)^{2}\int_{\widehat{\Omega}_{s}(z')}\left|\Delta\bar{u}\right|^{2}.
\end{align}

{\bf Case 1.} For $|z'|\leq \sqrt{\varepsilon}$. For $0<s<\sqrt{\epsilon}$, using \eqref{nabla2u_bar}, we have

\begin{equation}\label{integal_Lubar11_in}
\int_{\widehat{\Omega}_{s}(z')}
\left|\Delta\bar{u}\right|^{2}
\leq\frac{Cs^{n-1}}{\varepsilon},\quad\mbox{if}~\,0<s<\sqrt{\varepsilon}.
\end{equation}
Note that
\begin{align}\label{energy_w_square_in}
\int_{\widehat{\Omega}_{s}(z')}|w_{1}|^{2}
\leq\,C\varepsilon^{2}\int_{\widehat{\Omega}_{s}(z')}|\nabla{w}_{1}|^{2},
\quad\mbox{if}~\,0<s<\sqrt{\varepsilon}.
\end{align}
Denote
$$F(t):=\int_{\widehat{\Omega}_{t}(z')}|\nabla{w}_{1}|^{2}.$$
It follows from \eqref{FsFt11}, \eqref{integal_Lubar11_in} and
\eqref{energy_w_square_in} that
\begin{equation}\label{tildeF111_in}
F(t)\leq\,\left(\frac{c_{1}\varepsilon}{s-t}\right)^{2}F(s)+C(s-t)^{2}\frac{s^{n-1}}{\varepsilon},
\quad\forall~0<t<s<\sqrt{\varepsilon},
\end{equation}
where $c_1$ is a universal constant.

Let $k=\left[\frac{1}{4c_{1}\sqrt{\varepsilon}}\right]$ and $t_{i}=\delta+2c_{1}i\varepsilon$, $i=0,1,2,\cdots,k$. Then
by \eqref{tildeF111_in} with $s=t_{i+1}$ and $t=t_{i}$, we have
$$F(t_{i})\leq\,\frac{1}{4}F(t_{i+1})
+C(i+1)^{n-1}\varepsilon^{n}.$$
After $k$ iterations,
using (\ref{1energy_w}), we have
\begin{eqnarray*}
F(t_{0})
\leq (\frac{1}{4})^{k}F(t_{k})
+C\varepsilon^{n}\sum_{i=0}^{k-1}(\frac{1}{4})^{i}(i+1)^{n-1}
\leq\,C\varepsilon^{n}.
\end{eqnarray*}
Therefore
$$\int_{\widehat{\Omega}_{\delta}(z')}|\nabla{w}_{1}|^{2}\leq\,C\varepsilon^{n}.$$

{\bf Case 2.} For $\sqrt{\varepsilon}\leq|z'|\leq\,R_{0}$. Estimate \eqref{integal_Lubar11_in} becomes
\begin{equation}\label{integal_Lubar11}
\int_{\widehat{\Omega}_{s}(z')}\left|\Delta\bar{u}\right|^{2}\leq\frac{Cs^{n-1}}{|z'|^{2}},\quad\mbox{if}~0<s<|z'|.
\end{equation}
Estimate \eqref{energy_w_square_in} becomes
\begin{align}\label{energy_w_square}
\int_{\widehat{\Omega}_{s}(z')}|w_{1}|^{2}
\leq&\,C|z'|^{4}\int_{\widehat{\Omega}_{s}(z')}|\nabla{w}_{1}|^{2}, \quad\mbox{if}~\,0<s<|z'|.
\end{align}
Estimate \eqref{tildeF111_in} becomes, in view of \eqref{FsFt11},
\begin{equation}\label{tildeF111}
F(t)\leq\,\left(\frac{c_{2}|z'|^{2}}{s-t}\right)^{2}F(s)+C(s-t)^{2}\frac{s^{n-1}}{|z'|^{2}},
\quad\forall~0<t<s<|z'|,
\end{equation}
where $c_2$ is another universal constant.

Let $k=\left[\frac{1}{4c_{2}|z'|}\right]$ and $t_{i}=\delta+2c_{2}i\,|z'|^{2}$, $i=0,1,2,\cdots,k$. Then applying \eqref{tildeF111} with $s=t_{i+1}$ and $t=t_{i}$, we have
$$F(t_{i})\leq\,\frac{1}{4}F(t_{i+1})+\frac{C(t_{i+1}-t_{i})^{2}t_{i+1}^{n-1}}{|z'|^{2}}
\leq\,\frac{1}{4}F(t_{i+1})+C(i+1)^{n-1}|z'|^{2n},
$$
After $k$ iterations, using (\ref{1energy_w}) we have
\begin{eqnarray*}
F(t_{0}) \leq (\frac{1}{4})^{k}F(t_{k})+C|z'|^{2n}\sum_{i=0}^{k-1}(\frac{1}{4})^{i}(i+1)^{n-1}
\leq C|z'|^{2n}.
\end{eqnarray*}
This implies that
$$\int_{\widehat{\Omega}_{\delta}(z')}|\nabla{w}_{1}|^{2}\leq\,C|z'|^{2n}.$$
\eqref{energy_w_inomega_z1} is proved.

{\bf STEP 1.3.} Rescaling and $L^{\infty}$ estimates. Denote $\delta:=\delta(z')$. Making a change of variables
\begin{equation}\label{changeofvariant}
 \left\{
  \begin{aligned}
  &x'-z'=\delta y',\\
  &x_n=\delta y_n,
  \end{aligned}
 \right.
\end{equation}
then $\widehat{\Omega}_{\delta}(z')$ becomes $Q'_{1}$, where
$$Q'_{r}=\left\{y\in\mathbb{R}^{n}~\Big|~\frac{1}{\delta}h(\delta{y}'+z')<y_{n}
<\frac{\varepsilon}{\delta}+\frac{1}{\delta}h_{1}(\delta{y}'+z'),~|y'|<r\right\},\quad\mbox{for}~~r\leq1,$$ and the top and
bottom boundaries
become
$$
y_n=\hat{h}_{1}(y'):=\frac{1}{\delta}
\left(\varepsilon+h_{1}(\delta\,y'+z')\right),\quad|y'|<1,$$
and
$$y_n=\hat{h}(y'):=\frac{1}{\delta}h(\delta\,y'+z'), \quad |y'|<1.
$$
 Then
$$\hat{h}_{1}(0')-\hat{h}(0'):=\frac{\varepsilon+h_{1}(z')-h(z')}{\delta}=1,$$
and by \eqref{h1h0}--\eqref{h1h3},
$$|\nabla_{x'}\hat{h}_{1}(0')|+|\nabla_{x'}\hat{h}(0')|\leq\,C|z'|,
\quad|\nabla_{x'}^{2}\hat{h}_{1}(0')|+|\nabla_{x'}^{2}\hat{h}(0')|\leq\,C\delta.$$
Since $R_{0}$ is small, $\|\hat{h}_{1}\|_{C^{1,1}((-1,1)^{n-1})}$ and $\|\hat{h}\|_{C^{1,1}((-1,1)^{n-1})}$ are small and $Q'_{1}$ is essentially a unit square (or a unit cylinder) as far as applications of Sobolev embedding
theorems and classical $L^{p}$ estimates for elliptic equations are concerned.
Let
\begin{equation}\label{def_U}
U(y', y_n):=\bar{u}(\delta\,y'+z',\delta\,y_{n}),\quad\,W(y', y_n):=w_{1}(\delta\,y'+z',\delta\,y_{n}),
\quad\,y\in{Q}'_{1},
\end{equation}
then by the equation in \eqref{w20},
\begin{align}%\label{syswidew2}
-\Delta{W}
=\Delta{U},
\quad\quad\,y\in{Q'_{1}}.
\end{align}
where
$$\left|\Delta{U}\right|=\delta^{2}\left|\Delta\bar{u}\right|.$$
Since $W=0$ on the top and
bottom boundaries of $Q'_{1}$, it follows from the
 Poincar\'{e} inequality that
$$\left\|W\right\|_{H^{1}(Q'_{1})}\leq\,C\left\|\nabla{W}\right\|_{L^{2}(Q'_{1})}.$$
By
 $W^{2,p}$ estimates for elliptic equations and
Sobolev embedding theorems,
for $p>n$,
\begin{align*}
\left\|\nabla{W}\right\|_{L^{\infty}(Q'_{1/2})}\leq\,
C\left\|W\right\|_{W^{2,p}(Q'_{1/2})}
\leq\,C\left(\left\|\nabla{W}\right\|_{L^{2}(Q'_{1})}+\left\|\Delta{U}\right\|_{L^{\infty}(Q'_{1})}\right).
\end{align*}
Therefore
\begin{equation}
\left\|\nabla{w_{1}}\right\|_{L^{\infty}(\widehat{\Omega}_{\delta/2}(z'))}\leq\,
\frac{C}{\delta}\left(\delta^{1-\frac{n}{2}}\left\|\nabla{w_{1}}\right\|_{L^{2}(\widehat{\Omega}_{\delta}(z'))}
+\delta^{2}\left\|\Delta\bar{u}\right\|_{L^{\infty}(\widehat{\Omega}_{\delta}(z'))}\right).
\label{AAA}
\end{equation}

{\bf Case 1.} For $|z'|\leq\sqrt{\varepsilon}$.
Using \eqref{nabla2u_bar} and \eqref{delta_x},
$$\delta^{2}\left|\Delta\bar{u}\right|\leq
\frac{C \delta^{2}}{\varepsilon}\leq\,C
\varepsilon, \qquad
\mbox{in}\
\widehat \Omega_\delta(z').$$
It follows from (\ref{AAA}) and (\ref{energy_w_inomega_z1}) that
$$\left|\nabla{w}_{1}(z',x_{n})\right|\leq\frac{C(\delta^{1-\frac{n}{2}}\varepsilon^{\frac{n}{2}}+\varepsilon)}{\delta}
\leq\,C,
\qquad\forall \ h(z')<x_{n}<\varepsilon+h_1(z').$$

{\bf Case 2.} For $\sqrt{\varepsilon}\leq|z'|\leq\,R_{0}$.
Using \eqref{nabla2u_bar} and \eqref{delta_x},
$$\delta^{2}\left|\Delta\bar{u}\right|\leq\frac{C\delta^{2}}{|z'|^{2}}\leq\,C|z'|^{2},
\qquad \mbox{in}\ \widehat\Omega_\delta(z').$$
We deduce from (\ref{AAA}) and (\ref{energy_w_inomega_z1}) that
$$\left|\nabla{w}_{1}(z',x_{n})\right|\leq\frac{C(\delta^{1-\frac{n}{2}}|z'|^{n}+|z'|^{2})}{\delta}\leq\,C,
\qquad\forall \
h(z')<x_{n}<\varepsilon+h_1(z').$$
Estimate \eqref{nabla_w_i0} is established.

\noindent{\bf STEP 2.} Proof of \eqref{v1+v2_bounded1}.

Similar to the proof of \eqref{nabla_w_i0}, we introduce a function $\hat{u}\in C^{2}(\mathbb R^{n})$, such that $\hat{u}=0$ on $\partial{D}_{1}$, $\hat{u}=\varphi(x)-\varphi(0\,)$ on $\partial{D}$,
\begin{align}\label{uhat}
\hat{u}(x)
=(1-\bar{u}(x))\Big(\varphi(x^{\prime},h(x^{\prime}))-\varphi(0)\Big),\quad\mbox{in}~~\Omega_{R_{0}},
\end{align}
and
\begin{equation}\label{nablau_hat_outside}
\|\hat{u}\|_{C^{2}(\mathbb{R}^{n}\setminus \Omega_{R_{0}})}\leq\,C.
\end{equation}
Denote
\begin{equation*}
w_{0}:= v_{0}-\hat{u}.
\end{equation*}
Then by the definitions of $v_{0}$, \eqref{equ_v0},
\begin{equation}\label{w}
\begin{cases}
-\Delta {w}_{0}=\Delta\hat{u},\quad&\mbox{in}~\Omega,\\
 w_{0}=0,\quad\quad&\mbox{on}~\partial\Omega.
\end{cases}
\end{equation}
Similarly as \eqref{nabla_w_out} and \eqref{w_bdd}, we have
\begin{equation}\label{nabla_w}
\| w_{0}\|_{L^{\infty}(\Omega)}+\|\nabla {w}_{0}\|_{L^{\infty}(\Omega\setminus\Omega_{R_{0}/2})}\leq\,C.
\end{equation}
Thus, in order to prove \eqref{v1+v2_bounded1}, we only need to prove
\begin{equation*}%\label{section2 energyw}
\left\|\nabla {w}_{0}\right\|_{L^{\infty}(\Omega_{R_{0}})}\leq\,C.
\end{equation*}

By a direct calculation, we have for $x\in\Omega_{R_{0}}$,
\begin{equation*}
\begin{aligned}
\partial_{x_{i}}\hat{u}&=\left(1-\bar{u}\right)\Big(\partial_{x_{i}}\varphi+\partial_{x_{n}}\varphi\partial_{x_{i}}h\Big)-\partial_{x_{i}}\bar{u}~\Big(\varphi(x',h(x'))-\varphi(0)\Big),\quad i=1,2,\cdots,n-1,\\
\partial_{x_{n}}\hat{u}&=-\partial_{x_{n}}\bar{u}~\Big(\varphi(x',h(x'))-\varphi(0)\Big).
\end{aligned}
\end{equation*}
Using the assumption on $\varphi$, we have
\begin{equation}\label{1-varphi}
\Big|\varphi(x',h(x'))-\varphi(0)\Big|\leq\,C\|\varphi\|_{C^{1}(\partial{D})}\,|x'|,\qquad\mbox{in}~~\Omega_{R_{0}},
\end{equation}
then in view of \eqref{h1h0}--\eqref{h1h3},
\begin{equation}\label{nabla_uhat}
|\nabla\hat{u}|\leq\,C\left(\frac{|x'|}{\varepsilon+|x^{\prime}|^{2}}+1\right)\|\varphi\|_{C^{1}(\partial{D})},\qquad\mbox{in}~~\Omega_{R_{0}}.
\end{equation}
Futhermore,
\begin{align*}
\Delta\hat{u}=&(1-\bar{u})\Big(\Delta_{x'}\varphi+2\nabla_{x'}(\partial_{x_{n}}\varphi)\cdot\nabla_{x'}h+(\partial_{x_{n}}\varphi)\Delta_{x'}h+\partial_{x_{n}x_{n}}\varphi(\nabla_{x'}h)^{2}\Big)\\
&-2\nabla_{x'}\bar{u}\cdot\Big(\nabla_{x'}\varphi+(\partial_{x_{n}}\varphi)\nabla_{x'}h\Big)-\Delta\bar{u}\,\Big(\varphi(x',h(x'))-\varphi(0)\Big),
\end{align*}
and using \eqref{1-varphi} and \eqref{h1h0}--\eqref{h1h3} again,
$$|\Delta\hat{u}|\leq\,C\left(\frac{|x'|}{\varepsilon+|x^{\prime}|^{2}}+1\right)\|\varphi\|_{C^{2}(\partial{D})},\qquad\mbox{in}~~\Omega_{R_{0}},$$
which is better than \eqref{nabla2u_bar}. Therefore, the rest of the proof is completely the same as step 1.1--1.3 above. \eqref{v1+v2_bounded1} is proved. The proof of Proposition \ref{prop1} is completed.
\end{proof}

\subsection{Estimates of $Q[\varphi]$}\label{sec_Qphi}

In order to identify the lower bound \eqref{lowerbound1}, we estimate $|Q[\varphi]|$ in this subsection. Let $D_{1}^{*}:=D_{1}-(0',\varepsilon)$ and $\Omega^{*}:=D\setminus\overline{D_{1}^{*}}$ and define
\begin{equation}\label{equ_v12*}
\begin{cases}
\Delta{v}_{1}^{*}=0,&\mbox{in}~\Omega^{*},\\
v_{1}^{*}=1,&\mbox{on}~\partial{D}_{1}^{*}\setminus\{0\},\\
v_{1}^{*}=0,&\mbox{on}~\partial{D}\setminus\{0\},
\end{cases}
\qquad\mbox{and}\qquad
\begin{cases}
\Delta{v}_{0}^{*}=0,&\mbox{in}~\Omega^{*},\\
v_{0}^{*}=0,&\mbox{on}~\partial{D}_{1}^{*},\\
v_{0}^{*}=\varphi(x)-\varphi(0\,),&\mbox{on}~\partial{D}.
\end{cases}
\end{equation}

\begin{lemma}\label{lem2.31}
There exists a unique $v_{i}^{*}\in{L}^{\infty}(\Omega^{*})\cap\,C^{0}(\overline{\Omega^{*}}\setminus\{0\})\cap\,C^{2}(\Omega^{*})$, $i=0,1$, which solve \eqref{equ_v12*}. Moreover, $v_{i}^{*}\in{C}^{1}(\overline{\Omega^{*}}\setminus\{0\})$.
\end{lemma}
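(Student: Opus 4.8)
The plan is to construct $v_i^*$ by Perron's method, to read off its interior and lateral regularity from classical elliptic theory, and to prove uniqueness in the stated class by a capacity-type barrier argument centered at the origin. As a preliminary observation, $\partial\Omega^*=\partial D\cup\partial D_1^*$ is a union of two $C^{2,\alpha}$ hypersurfaces which, by strict convexity together with \eqref{h1h0}--\eqref{h1h2} (the latter giving $h_1(x')-h(x')\ge c|x'|^2$ for $|x'|$ small), are disjoint away from the origin and meet there at the single point $P=0$, where they are internally tangent. In particular $\Omega^*$ is a bounded connected open set (removing from $D$ the convex body $\overline{D_1^*}$, which touches $\partial D$ in one point, does not disconnect it when $n\ge2$) whose boundary is, near every point $x_0\neq0$, a single $C^{2,\alpha}$ graph.

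For existence, let $g_i$ denote the boundary data prescribed in \eqref{equ_v12*}: it is bounded on $\partial\Omega^*\setminus\{0\}$ and continuous there (locally constant if $i=1$; equal to $\varphi-\varphi(0)$ on $\partial D$ and to $0$ on $\partial D_1^*$ if $i=0$), and we extend it to $0$ arbitrarily. The Perron solution $v_i^*$ associated with $g_i$ is harmonic in $\Omega^*$, and by the maximum principle $\inf g_i\le v_i^*\le\sup g_i$, so $v_i^*\in L^\infty(\Omega^*)$; interior regularity gives $v_i^*\in C^\infty(\Omega^*)\subset C^2(\Omega^*)$. Every $x_0\in\partial\Omega^*\setminus\{0\}$ satisfies an exterior ball condition, hence is a regular boundary point, and since $g_i$ is continuous at $x_0$ the Perron solution attains $g_i(x_0)$ there, so $v_i^*\in C^0(\overline{\Omega^*}\setminus\{0\})$. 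Finally, near $x_0$ the problem reads $\Delta v_i^*=0$ with $C^{2,\alpha}$ boundary and $C^{2,\alpha}$ boundary data, so boundary Schauder estimates yield $v_i^*\in C^{2,\alpha}$ up to $\partial\Omega^*\setminus\{0\}$, in particular $v_i^*\in C^1(\overline{\Omega^*}\setminus\{0\})$.

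For uniqueness, suppose $v_i^*$ and $\widetilde v_i^*$ both lie in the stated class and solve \eqref{equ_v12*}; then $w:=v_i^*-\widetilde v_i^*$ is bounded and harmonic in $\Omega^*$ and vanishes on $\partial\Omega^*\setminus\{0\}$. Fix $R$ with $\Omega^*\subset B_R(0)$ and set $G(x):=|x|^{2-n}$ if $n\ge3$ and $G(x):=\log(R/|x|)$ if $n=2$; in either case $G$ is positive and harmonic in $\Omega^*$ and $G(x)\to+\infty$ as $x\to0$. Fix $\sigma>0$ and $x\in\Omega^*$, and choose $r\in(0,|x|)$ small enough that $w<\sigma G$ on $\partial B_r(0)$, which is possible since $G$ blows up at $0$. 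On $\Omega^*\setminus\overline{B_r(0)}$ the function $w-\sigma G$ is harmonic, is $\le0$ on $\partial\Omega^*\setminus B_r(0)$ (where $w=0$ and $G>0$), and is $<0$ on $\partial B_r(0)\cap\overline{\Omega^*}$; by the maximum principle $w(x)\le\sigma G(x)$. Letting $\sigma\to0$ gives $w(x)\le0$, and applying the same argument to $-w$ gives $w(x)\ge0$; hence $w\equiv0$.

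The only genuine obstacle is the boundary point $\{0\}$ where $\partial D$ and $\partial D_1^*$ touch: there the data for $v_1^*$ is honestly discontinuous and one should not expect $v_1^*$ to extend continuously across $0$ --- indeed $v_1^*\notin H^1(\Omega^*)$, in line with the fact that the Dirichlet energy $a_{11}$ of the analogous $\varepsilon$-problem is comparable to $\rho_n(\varepsilon)^{-1}$ and diverges as $\varepsilon\to0$. What rescues the statement is that a single point has zero capacity in $\mathbb R^n$ for $n\ge2$, so it is a removable singularity for bounded harmonic functions; this is precisely what the comparison with $G$ above encodes, and it is what singles out the Perron solution as the unique solution in $L^\infty(\Omega^*)\cap C^0(\overline{\Omega^*}\setminus\{0\})$.
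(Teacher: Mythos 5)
Your proof is correct and follows essentially the same route as the paper's: Perron's method (the paper cites Gilbarg--Trudinger) plus standard elliptic estimates for the existence and the $C^{1}$ regularity away from the origin, and for uniqueness the same zero-capacity barrier comparison with the fundamental solution --- the paper bounds $|w|$ by $\epsilon^{n-2}\|w\|_{L^{\infty}}|x|^{2-n}$ and lets $\epsilon\to0$, which is your comparison with $\sigma G$ in slightly different clothing. The only inaccuracy is your closing aside: for $n\geq4$ one has $\rho_{n}(\varepsilon)=1$, so $a_{11}$ stays bounded and the claim $v_{1}^{*}\notin H^{1}(\Omega^{*})$ is not justified in those dimensions, but this remark plays no role in the argument.
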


\begin{proof}
The existence of solutions of \eqref{equ_v12*} can easily be obtained by Perron's method, see theorem 2.12 and lemma 2.13 in \cite{gt}. For the readers' convenience, we give a simple proof of the uniqueness for $n\geq3$. The case $n=2$ is similar. We only need to prove that $0$ is the only solution in $L^{\infty}(\Omega^{*})\cap{C}^{0}(\overline{\Omega^{*}}\setminus\{0\})\cap{C}^{2}(\Omega^{*})$ of the following equation
$$
\begin{cases}
\Delta{w}=0,&\mbox{in}~\Omega^{*},\\
w=0,&\mbox{on}~\partial\Omega^{*}\setminus\{0\}.
\end{cases}
$$
Indeed, noticing that $w=0$ on $\partial\Omega^{*}\setminus\{0\}$, it follows that for any $\epsilon>0$,
$$|w(x)|\leq\frac{\epsilon^{n-2}\|w\|_{L^{\infty}(\Omega^{*})}}{|x|^{n-2}}, \quad\mbox{on}~~\partial(\Omega^{*}\setminus{B}_{\epsilon}(0)).$$
Using the maximum principle, we have
$$|w(x)|\leq\frac{\epsilon^{n-2}\|w\|_{L^{\infty}(\Omega^{*})}}{|x|^{n-2}}, \quad\mbox{in}~~\Omega^{*}\setminus{B}_{\epsilon}(0).$$
Thus, $w=0$ in $\Omega^{*}$. The additional regularity follows from standard elliptic estimates and the smoothness of $\partial{D}_{1}$ and $\partial{D}$.
\end{proof}

\begin{lemma}\label{lem2.3}
For $i=0,1$,
\begin{equation}\label{equ_viconvergence}
v_{i}\rightarrow\,v_{i}^{*},\quad\mbox{in}~~C_{loc}^{2}(\Omega^{*}), \quad\mbox{as}~~\varepsilon\rightarrow0,
\end{equation}
and
\begin{equation}\label{Q_convergence}
\int_{\partial{D}_{1}}\frac{\partial v_{0}}{\partial\nu}\rightarrow\,\int_{\partial{D}_{1}^{*}}\frac{\partial v_{0}^{*}}{\partial\nu}, \quad\mbox{as}~~\varepsilon\rightarrow0.
\end{equation}
\end{lemma}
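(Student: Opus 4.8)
The plan is to prove Lemma \ref{lem2.3} by combining a uniform-boundedness/compactness argument with the uniqueness statement from Lemma \ref{lem2.31}. First I would fix a compact set $K\subset\subset\Omega^{*}$ and note that, for all $\varepsilon$ small enough, $K\subset\subset\Omega=D\setminus\overline{D}_{1}$ as well, since $D_{1}=D_{1}^{*}+(0',\varepsilon)\to D_{1}^{*}$ in the Hausdorff sense and $\overline{D}_{1}$ stays uniformly away from $K$. The functions $v_{i}$ are harmonic in $\Omega$ and satisfy $0\le v_{1}\le 1$ by the maximum principle, while $\|v_{0}\|_{L^{\infty}(\Omega)}\le\|\varphi-\varphi(P)\|_{L^{\infty}(\partial D)}\le C$; hence $\{v_{i}\}_{\varepsilon}$ is uniformly bounded on a fixed neighborhood of $K$. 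By interior elliptic estimates for harmonic functions, every $C^{2}(K)$ norm is then uniformly bounded, and by Arzel\`a--Ascoli (applied to $v_{i}$ together with derivatives up to order, say, three, to get $C^{2}$ convergence) any sequence $\varepsilon_{j}\to 0$ has a subsequence along which $v_{i}$ converges in $C^{2}_{loc}(\Omega^{*})$ to some limit $\tilde v_{i}$, harmonic in $\Omega^{*}$.

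The next step is to identify the limit $\tilde v_{i}$ with $v_{i}^{*}$ using the boundary conditions. Away from the point $0$, the portions of $\partial D_{1}$ near $\partial D_{1}^{*}\setminus\{0\}$ converge in $C^{2,\alpha}$ to the corresponding portion of $\partial D_{1}^{*}$ (a translation by $(0',\varepsilon)$), and $\partial D$ is fixed; so for any compact piece $\Gamma\subset\partial\Omega^{*}\setminus\{0\}$ one can use barriers / boundary regularity to show $\tilde v_{i}$ attains the correct boundary data ($\tilde v_{1}=1$ on $\partial D_{1}^{*}\setminus\{0\}$, $\tilde v_{1}=0$ on $\partial D\setminus\{0\}$; $\tilde v_{0}=0$ on $\partial D_{1}^{*}$, $\tilde v_{0}=\varphi-\varphi(0)$ on $\partial D$) and lies in $L^{\infty}\cap C^{0}(\overline{\Omega^{*}}\setminus\{0\})\cap C^{2}(\Omega^{*})$. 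The uniform $L^{\infty}$ bound passes to the limit, so $\tilde v_{i}$ is an admissible competitor in the uniqueness class of Lemma \ref{lem2.31}; therefore $\tilde v_{i}=v_{i}^{*}$. Since the limit is independent of the subsequence, the full family converges: $v_{i}\to v_{i}^{*}$ in $C^{2}_{loc}(\Omega^{*})$, which is \eqref{equ_viconvergence}.

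Finally, to obtain \eqref{Q_convergence} I would write the flux integral on a fixed smooth hypersurface rather than on the moving boundary $\partial D_{1}$. Choose a fixed $C^{1}$ closed hypersurface $\Sigma$ enclosing $\overline{D_{1}^{*}}$, contained in a compact subset of $\Omega^{*}$, and also enclosing $\overline{D}_{1}$ for all small $\varepsilon$; then in the region between $\partial D_{1}$ and $\Sigma$ both $v_{0}$ and $v_{0}^{*}$ are harmonic, so by the divergence theorem $\int_{\partial D_{1}}\partial_{\nu}v_{0}=\int_{\Sigma}\partial_{\nu}v_{0}$ and $\int_{\partial D_{1}^{*}}\partial_{\nu}v_{0}^{*}=\int_{\Sigma}\partial_{\nu}v_{0}^{*}$. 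Since $\Sigma$ is a fixed compact subset of $\Omega^{*}$, the $C^{1}_{loc}$ (indeed $C^{2}_{loc}$) convergence $v_{0}\to v_{0}^{*}$ gives $\int_{\Sigma}\partial_{\nu}v_{0}\to\int_{\Sigma}\partial_{\nu}v_{0}^{*}$, and \eqref{Q_convergence} follows.

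The main obstacle is the behavior near the shrinking neck, i.e.\ near the point $0$ where $\partial D_{1}$ touches $\partial D$ in the limit: the gradients of $v_{i}$ blow up there (Theorem \ref{thm1}), so one genuinely cannot push the convergence up to $0$, and one must be careful that the flux identity is set up on a fixed surface $\Sigma$ that stays away from $0$ and encloses the inclusion for \emph{all} small $\varepsilon$ simultaneously. The rest is a routine normal-families argument; the only other point requiring a little care is verifying that the limit $\tilde v_{i}$ really does take on the boundary values in the $C^{0}(\overline{\Omega^{*}}\setminus\{0\})$ sense, which follows from uniform boundary barriers available because $\partial D$ and the relevant part of $\partial D_{1}$ are uniformly $C^{2,\alpha}$ away from $0$.
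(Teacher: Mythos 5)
Your treatment of \eqref{equ_viconvergence} is fine and is essentially the paper's own argument: uniform $L^{\infty}$ bounds from the maximum principle, interior elliptic estimates plus boundary estimates away from the origin, compactness, and identification of every subsequential limit through the uniqueness part of Lemma \ref{lem2.31}.

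The argument for \eqref{Q_convergence}, however, has a genuine gap: the surface $\Sigma$ you rely on does not exist. By construction $D_{1}^{*}=D_{1}-(0',\varepsilon)$ touches $\partial D$ at the origin ($0\in\partial D_{1}^{*}\cap\partial D$), and $\overline{D}_{1}$ lies within distance $\varepsilon$ of $\partial D$ there. Any closed hypersurface contained in $D$ that encloses $\overline{D}_{1}$ (or $\overline{D_{1}^{*}}$) must separate the inclusion from $\partial D$, hence must pass through the neck near $0$, whose width is $\varepsilon$ (zero in the starred configuration); it therefore cannot lie in a fixed compact subset of $\Omega^{*}=D\setminus\overline{D_{1}^{*}}$ for all small $\varepsilon$. (Concretely: if $\Sigma\subset D$ is closed and stays at distance $d>0$ from $0$, the region it bounds is contained in $D$; once $\varepsilon<d/2$ that region would have to contain points of $D_{1}$ near $P_{1}=(0',\varepsilon)$, hence the whole ball $B_{d}(0)$, which exits $D$ — a contradiction.) So the identity $\int_{\partial D_{1}}\partial_{\nu}v_{0}=\int_{\Sigma}\partial_{\nu}v_{0}$ on a fixed $\Sigma$ away from $0$ is unavailable, and the point you flag as merely ``requiring care'' is in fact fatal to the argument as written. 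A repair along your lines would have to let the surface dip through the neck and show that the flux through a cross-section $\{|x'|=r\}$ is small uniformly in $\varepsilon$, which needs the quantitative gradient bound \eqref{v1+v2_bounded1} near the neck — input beyond the normal-families argument. The paper sidesteps the issue: since $v_{1}=1$ on $\partial D_{1}$ and $v_{1}=0$ on $\partial D$, Green's second identity gives
$$\int_{\partial D_{1}}\frac{\partial v_{0}}{\partial\nu}=\int_{\partial D}\frac{\partial v_{1}}{\partial\nu}\,\big(\varphi(x)-\varphi(0)\big),$$
and the analogous identity for $v_{0}^{*},v_{1}^{*}$, so the flux is rewritten on the \emph{fixed} outer boundary $\partial D$, where the factor $\varphi-\varphi(0)$ vanishes precisely at the one point where $\partial_{\nu}v_{1}$ degenerates; \eqref{Q_convergence} then follows from \eqref{equ_viconvergence} (together with boundary estimates up to $\partial D\setminus\{0\}$). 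You should adopt this (or an equivalently justified) flux-transfer step rather than the fixed interior surface.
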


\begin{proof}
By the maximum principle, $\|v_{i}\|_{L^{\infty}}$ is bounded by a constant independent of $\varepsilon$. By the uniqueness  part of Lemma \ref{lem2.31}, we obtain \eqref{equ_viconvergence} using standard elliptic estimates. It follows from the definition of $v_{1}$ and the Green's formula that
$$\int_{\partial{D}_{1}}\frac{\partial v_{0}}{\partial\nu}=\int_{\partial{D}_{1}}\frac{\partial v_{0}}{\partial\nu}v_{1}=\int_{\partial\Omega}\frac{\partial v_{0}}{\partial\nu}v_{1}=\int_{\partial\Omega}\frac{\partial{v}_{1}}{\partial\nu} v_{0}=\int_{\partial{D}}\frac{\partial{v}_{1}}{\partial\nu}~\Big(\varphi(x)-\varphi(0)\Big).$$
Similarly,
\begin{equation}\label{equ_Q*}
\int_{\partial{D}_{1}^{*}}\frac{\partial v_{0} ^{*}}{\partial\nu}=\int_{\partial{D}_{1}^{*}}\frac{\partial v_{0} ^{*}}{\partial\nu}v_{1}^{*}=\int_{\partial{D}}\frac{\partial{v}_{1}^{*}}{\partial\nu}~\Big(\varphi(x)-\varphi(0)\Big).
\end{equation}
\eqref{Q_convergence} follows from \eqref{equ_viconvergence}.
\end{proof}

Define
$$Q^{*}[\varphi]=\int_{\partial{D}_{1}^{*}}\frac{\partial v_{0} ^{*}}{\partial\nu}.$$
By Lemma \ref{lem2.3},
$$Q[\varphi]\rightarrow\,Q^{*}[\varphi],\qquad\mbox{as}~~\varepsilon\rightarrow0.$$

\begin{corollary}
If $\varphi\in{C}^{2}(\partial{D})$ satisfies $Q^{*}[\varphi]\neq0$, then $|Q[\varphi]|\geq\,c^{*}$, for some positive universal constant $c^{*}$ which is independent of $\varepsilon$.
\end{corollary}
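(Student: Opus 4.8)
The plan is to establish the corollary by combining the convergence $Q[\varphi]\to Q^{*}[\varphi]$ from Lemma \ref{lem2.3} with a standard contradiction/compactness argument. First I would suppose, for contradiction, that no such universal constant $c^{*}$ exists. Then there must be a sequence of admissible configurations, i.e. a sequence $\varepsilon_{k}\to 0$ together with domains $D^{(k)}$, $D_{1}^{(k)}$ satisfying all the hypotheses with uniform $C^{2,\alpha}$ bounds, for which $Q[\varphi]=Q_{k}[\varphi]\to 0$. Since the domains have uniformly bounded $C^{2,\alpha}$ norms and $P$, $P_{1}$ are normalized at the origin and at $(0',\varepsilon_{k})$, after passing to a subsequence the boundaries $\partial D^{(k)}$ and $\partial D_{1}^{(k)}$ converge in $C^{2}$ to limiting boundaries $\partial D^{(\infty)}$ and $\partial D_{1}^{(\infty)}$, which (since $\varepsilon_{k}\to 0$) share the single contact point at the origin; the limiting configuration is exactly of the form $D_{1}^{*}\subset D$ considered in \eqref{equ_v12*}.

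Next I would invoke Lemma \ref{lem2.3}, or rather its proof, which shows that $v_{0}$ (for the configuration with parameter $\varepsilon_{k}$) converges in $C^{2}_{loc}(\Omega^{*})$ to $v_{0}^{*}$, the solution of \eqref{equ_v12*} for the limiting domain, and consequently that $Q_{k}[\varphi]\to Q^{*}[\varphi]$. Hence $Q^{*}[\varphi]=\lim_{k}Q_{k}[\varphi]=0$, contradicting the hypothesis $Q^{*}[\varphi]\neq 0$. This forces the existence of $c^{*}>0$ and $\varepsilon_{0}>0$ so that $|Q[\varphi]|\geq c^{*}$ for all $\varepsilon<\varepsilon_{0}$; for the remaining range $\varepsilon_{0}\leq\varepsilon<1/2$ the quantity $Q[\varphi]$ depends continuously on a compact set of parameters and is nowhere zero in the limit, so shrinking $c^{*}$ if necessary handles that range too. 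Since the convergence and all constants depend only on the universal data (the $C^{2,\alpha}$ bounds, $n$, $\kappa_{0}$, $\kappa_{1}$, and $\varphi$), the resulting $c^{*}$ is a universal constant.

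The main obstacle is the compactness step: one must check carefully that the normalization placing $P$ at the origin, $P_{1}$ at $(0',\varepsilon)$, and the graph representations \eqref{h1h0}--\eqref{h1h3} are stable under taking limits, so that the limiting domain genuinely falls within the class for which $Q^{*}[\varphi]$ is defined and Lemma \ref{lem2.31} guarantees a unique bounded solution $v_{0}^{*}$. In particular one needs the limiting inclusion to remain strictly convex (or at least relatively strictly convex near the contact point) with the distance collapsing to zero, which is exactly preserved by the uniform lower bounds $\nabla^{2}_{x'}h_{1}(0')\geq\kappa_{0}I_{n-1}$ and $\nabla^{2}_{x'}(h_{1}-h)(0')\geq\kappa_{1}I_{n-1}$. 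Once this is in place, the uniqueness from Lemma \ref{lem2.31} and the interior elliptic estimates used in Lemma \ref{lem2.3} give the desired $C^{2}_{loc}$ convergence and hence the convergence of the boundary integral $Q[\varphi]\to Q^{*}[\varphi]$, closing the argument.
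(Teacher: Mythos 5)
Your proposal is correct and rests on exactly the same ingredient as the paper: the convergence $Q[\varphi]\rightarrow Q^{*}[\varphi]$ from Lemma \ref{lem2.3}, which together with $Q^{*}[\varphi]\neq 0$ immediately gives $|Q[\varphi]|\geq |Q^{*}[\varphi]|/2$ for $\varepsilon$ small. The contradiction--compactness scaffolding over sequences of varying domains $D^{(k)},D_{1}^{(k)}$ is unnecessary (and slightly awkward, since $\varphi$ and $Q^{*}$ are tied to the fixed pair $D, D_{1}^{*}=D_{1}-(0',\varepsilon)$, so the one-parameter family already converges to the fixed limiting configuration without extracting subsequences), but it does not affect the validity of the argument.
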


\begin{remark}
It follows from the definition of $Q^{*}[\varphi]$ and \eqref{equ_Q*} that $Q^{*}[\varphi]=\int_{\partial{D}}\frac{\partial{v}_{1}^{*}}{\partial\nu}\big(\varphi(x)-\varphi(0)\big)$.
\end{remark}

\subsection{More general $D_{1}$ and $D$}\label{subsec_general_domain}

As mentioned in Remark \ref{rem1}, the strict convexity assumption of $D_{1}$ and $D$ can be weakened. In fact, our proof of Proposition \ref{prop1} applies , with minor modification, to more general situations: In $\mathbb{R}^{n}$, $n\geq2$, under the same assumptions in the beginning of Section \ref{sec_thm1} except the strict convexity assumptions \eqref{h1h2}. We assume that
\begin{equation}\label{h1h_convex1}
\lambda_{0}|x'|^{m}\leq\,h_{1}(x')-h(x')\leq\lambda_{1}|x'|^{m},\quad\mbox{for}~~|x'|<2R_{0},
\end{equation}
and
\begin{equation}\label{h1h_convex2}
|\nabla_{x'}h_{1}(x')|,|\nabla_{x'}^{2}h(x')|\leq\,C|x'|^{m-1},\quad|\nabla_{x'}^{2}h_{1}(x')|,|\nabla_{x'}^{2}h(x')|\leq\,C|x'|^{m-2},\quad\mbox{for}~~|x'|<2R_{0},
\end{equation}
for some $\varepsilon$-independent constants $0<\lambda_{0}<\lambda_{1}$, and $m\geq2$. Clearly,
$$\frac{1}{C}(\varepsilon+|x'|^{m})\leq\delta(x')\leq\,C(\varepsilon+|x'|^{m}).$$
Then by the same procedure in the proof of Proposition \ref{prop1}, we have
\begin{prop}\label{prop2}
Assume the above, under the assumptions \eqref{h1h_convex1} and \eqref{h1h_convex2}, instead of \eqref{h1h2}. Let $v_{1}, v_{0}\in{H}^1(\Omega)$ be the
weak solution of \eqref{equ_v1} and \eqref{equ_v0}. Then
\begin{equation}\label{nabla_w_i0m}
\|\nabla(v_{1}-\bar{u})\|_{L^{\infty}(\Omega)}\leq\,C.
\end{equation}
and
\begin{equation}\label{v1+v2_bounded1m}
 |\nabla v_{0}(x)|\leq\,C\left(\frac{|x'|^{m-1}}{\varepsilon+|x'|^{m}}+1\right)\|\varphi\|_{C^{2}(\partial{D})},\qquad\,x\in\Omega.
\end{equation}
Consequently,
\begin{equation}\label{nabla_v1m}
\frac{1}{C(\varepsilon+|x'|^{m})}\leq|\nabla{v}_{1}(x)|\leq\frac{C}{\varepsilon+|x'|^{m}},\quad\,x\in\Omega_{R_{0}},
\end{equation}
and
$$
|\nabla{v}_{1}(x)|\leq\,C,\quad\quad\ \,x\in \Omega\setminus \Omega_{R_{0}}.
$$
\end{prop}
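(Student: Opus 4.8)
The plan is to mirror, almost verbatim, the three-step structure of the proof of Proposition~\ref{prop1}, tracking how the exponent $2$ gets replaced by the general exponent $m$. First I would set up the auxiliary function $\bar u$ exactly as in \eqref{ubar}, namely $\bar u(x)=\frac{x_n-h(x')}{\delta(x')}$ in $\Omega_{R_0}$ with $\delta(x')=\varepsilon+h_1(x')-h(x')$, and extended with bounded $C^2$ norm outside. Using \eqref{h1h_convex1}--\eqref{h1h_convex2} in place of \eqref{h1h2}, the same direct computation that gave \eqref{nablau_bar} and \eqref{nabla2u_bar} now yields
\begin{equation*}
|\partial_{x_i}\bar u(x)|\le\frac{C|x'|^{m-1}}{\varepsilon+|x'|^{m}},\quad i=1,\dots,n-1,\qquad \partial_{x_n}\bar u(x)=\frac{1}{\delta(x')},\qquad |\Delta\bar u(x)|\le\frac{C}{\varepsilon+|x'|^{m}},
\end{equation*}
for $x\in\Omega_{R_0}$; the point is that $|\nabla_{x'}\delta|\le C|x'|^{m-1}$ and $|\nabla_{x'}^2\delta|\le C|x'|^{m-2}$, and $\delta\ge\frac1C(\varepsilon+|x'|^m)$, so every derivative loss is controlled by the correct power of $|x'|$. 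With $w_1:=v_1-\bar u$ solving $-\Delta w_1=\Delta\bar u$ in $\Omega$, $w_1=0$ on $\partial\Omega$, the bounded-energy estimate $\int_\Omega|\nabla w_1|^2\le C$ goes through unchanged, since $\int_{\Omega_{R_0}}|\Delta\bar u|\le C\int_{\Omega_{R_0}}\frac{dx}{\varepsilon+|x'|^m}<\infty$ (the relevant integral $\int_{|x'|<R_0}\frac{\delta(x')\,dx'}{\varepsilon+|x'|^m}\le C\int_{|x'|<R_0}dx'$ is trivially finite in every dimension).

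Next I would run the Caccioppoli/iteration scheme of STEP~1.2, but with the natural length scale adapted to the degeneracy: on a disk of radius $s$ around $z'$ one has $\delta\sim\varepsilon+|z'|^m$, hence the Poincar\'e-in-$x_n$ bound $\int_{\widehat\Omega_s(z')}|w_1|^2\le C(\varepsilon+|z'|^m)^2\int_{\widehat\Omega_s(z')}|\nabla w_1|^2$ and the source bound $\int_{\widehat\Omega_s(z')}|\Delta\bar u|^2\le C s^{n-1}(\varepsilon+|z'|^m)^{-1}$ for $0<s<\max\{\sqrt[m]{\varepsilon},|z'|\}$. Plugging these into \eqref{FsFt11} gives the analogue of \eqref{tildeF111}, $F(t)\le\big(\tfrac{c(\varepsilon+|z'|^m)}{s-t}\big)^2F(s)+C(s-t)^2\tfrac{s^{n-1}}{\varepsilon+|z'|^m}$. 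The iteration with step size comparable to $\varepsilon+|z'|^m$ (splitting into the two cases $|z'|\le\sqrt[m]{\varepsilon}$ and $\sqrt[m]{\varepsilon}\le|z'|\le R_0$, in the second of which $\varepsilon+|z'|^m\sim|z'|^m$) and number of steps $k\sim(\varepsilon+|z'|^m)^{-1}\cdot\min\{\sqrt[m]{\varepsilon},|z'|\}$ — wait, more simply $k\sim|z'|/|z'|^m=|z'|^{1-m}$ in Case~2 — yields
\begin{equation*}
\int_{\widehat\Omega_{\delta}(z')}|\nabla w_1|^2\le
\begin{cases}C\varepsilon^{\frac{n+m-1}{m}},&|z'|\le\sqrt[m]{\varepsilon},\\[1mm]C|z'|^{n+m-1},&\sqrt[m]{\varepsilon}<|z'|\le R_0,\end{cases}
\end{equation*}
i.e. in each regime the energy is bounded by $(\delta(z'))^{n-1}\cdot\delta(z')\cdot(\varepsilon+|z'|^m)\cdot\delta(z')^{-1}$-type quantities that, after the $k$-fold geometric sum $\sum(1/4)^i(i+1)^{n-1}\le C$, collapse to exactly what is needed. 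Then STEP~1.3: rescaling $x'-z'=\delta y'$, $x_n=\delta y_n$ flattens $\widehat\Omega_\delta(z')$ to a unit cylinder (the rescaled boundary functions have small $C^{1,1}$ norm because $|\nabla_{x'}\hat h_1(0')|\le C|z'|^{m-1}$ and $|\nabla^2\hat h_1(0')|\le C\delta|z'|^{m-2}=C\delta/|z'|^2\cdot|z'|^m\le C$ — again the powers balance), and the $W^{2,p}$--Sobolev estimate \eqref{AAA} combined with $\delta^2|\Delta\bar u|\le C\delta^2/(\varepsilon+|z'|^m)\le C(\varepsilon+|z'|^m)$ and the energy bound above gives $|\nabla w_1(z',x_n)|\le\frac{C}{\delta}\big(\delta^{1-n/2}(\varepsilon+|z'|^m)^{(n+m-1)/(2m)}\text{-ish}+ \delta^2/(\varepsilon+|z'|^m)\big)\le C$. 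This proves \eqref{nabla_w_i0m}.

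For \eqref{v1+v2_bounded1m} I would repeat STEP~2 with $\hat u(x)=(1-\bar u(x))(\varphi(x',h(x'))-\varphi(0))$, noting that $|\varphi(x',h(x'))-\varphi(0)|\le C\|\varphi\|_{C^1}|x'|$, so that $|\nabla\hat u|\le C(\frac{|x'|^{m-1}}{\varepsilon+|x'|^m}+1)\|\varphi\|_{C^1}$ and $|\Delta\hat u|\le C(\frac{|x'|^{m-1}}{\varepsilon+|x'|^m}+1)\|\varphi\|_{C^2}$ in $\Omega_{R_0}$; since the source for $w_0:=v_0-\hat u$ is one power of $|x'|$ better than that for $w_1$ (it vanishes faster near $x'=0'$), the same three-step argument applies and in fact gives $\|\nabla w_0\|_{L^\infty(\Omega_{R_0})}\le C\|\varphi\|_{C^2}$, whence \eqref{v1+v2_bounded1m} by the triangle inequality with $\nabla\hat u$. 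Finally \eqref{nabla_v1m} follows from \eqref{nabla_w_i0m}: the upper bound is $|\nabla v_1|\le|\nabla\bar u|+|\nabla w_1|\le\frac{C}{\delta}+C\le\frac{C}{\varepsilon+|x'|^m}$, and for the lower bound one uses $|\partial_{x_n}v_1|\ge|\partial_{x_n}\bar u|-|\nabla w_1|=\frac1{\delta}-C\ge\frac{1}{C(\varepsilon+|x'|^m)}$ on $\Omega_{R_0}$ for $R_0$ small (since $\delta\le C(\varepsilon+|x'|^m)\le 2CR_0^m$ is small); the bound away from the gap is the standard interior estimate. The main obstacle — really the only place requiring care — is the bookkeeping in STEP~1.2/1.3: one must choose the iteration step size and the number of iterations $k$ so that the geometric factor $(1/4)^k F(t_k)$ is negligible against the accumulated source term, and simultaneously check that after rescaling the boundary graphs $\hat h_1,\hat h$ remain uniformly $C^{1,1}$-small; both hinge on the matching of powers $|\nabla^j(h_1-h)|\lesssim|x'|^{m-j}$ with $\delta\sim\varepsilon+|x'|^m$, which is exactly what hypotheses \eqref{h1h_convex1}--\eqref{h1h_convex2} are designed to guarantee, so no genuinely new difficulty arises beyond Proposition~\ref{prop1}.
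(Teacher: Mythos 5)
Your overall strategy is the same as the paper's: its proof of this proposition is literally ``by the same procedure as Proposition \ref{prop1}'', with the scale $\varepsilon+|x'|^{2}$ replaced by $\varepsilon+|x'|^{m}$, and your preliminary estimates (the bounds $|\nabla_{x'}\bar u|\le C|x'|^{m-1}/\delta$, $\partial_{x_n}\bar u=1/\delta$, $|\Delta\bar u|\le C/\delta$, the global energy bound, and the Caccioppoli inequality with Poincar\'e constant $(\varepsilon+|z'|^{m})^{2}$ and source bound $Cs^{n-1}/(\varepsilon+|z'|^{m})$) are all correct. The genuine gap is the outcome you assign to the iteration in STEP~1.2: the bounds $C\varepsilon^{(n+m-1)/m}$ and $C|z'|^{n+m-1}$ are not what the iteration yields, and they are too weak for STEP~1.3 to close. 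The rescaled estimate \eqref{AAA} requires $\int_{\widehat\Omega_{\delta}(z')}|\nabla w_1|^{2}\le C\,\delta(z')^{n}$, i.e. $C\varepsilon^{n}$ for $|z'|\le\varepsilon^{1/m}$ and $C|z'|^{mn}$ for $\varepsilon^{1/m}\le|z'|\le R_0$ (for $m=2$ this is exactly \eqref{energy_w_inomega_z1}); with your exponents the first term of \eqref{AAA} is of size $\delta^{-n/2}F(\delta)^{1/2}\sim\delta^{-(m-1)(n-1)/(2m)}$, which is unbounded for every $m,n\ge2$ (already $|z'|^{-1/2}$ when $m=n=2$), so the final ``$\le C$'' in your STEP~1.3 does not follow from what you stated. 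The correct bookkeeping is the paper's: with step size $t_{i+1}-t_i\sim\varepsilon+|z'|^{m}$ and $k\sim\varepsilon^{1/m-1}$ (resp. $k\sim|z'|^{1-m}$) iterations, each step contributes $C(i+1)^{n-1}(\varepsilon+|z'|^{m})^{n}$ because $t_{i+1}^{\,n-1}\le C\bigl((i+1)(\varepsilon+|z'|^{m})\bigr)^{n-1}$, and the geometrically damped sum gives $F(\delta)\le C(\varepsilon+|z'|^{m})^{n}$; your exponents correspond instead to a single Caccioppoli step at the outer scale $s\sim\varepsilon^{1/m}$ or $s\sim|z'|$, and they even contradict your own heuristic ``$\delta^{n-1}\cdot(\varepsilon+|z'|^{m})$-type quantity'', which is the right answer. (Also, in Case~1 the number of steps should be $\sim\varepsilon^{1/m-1}$ independently of $|z'|$; your $k\sim|z'|/\varepsilon$ can be smaller than one.)

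A second, smaller issue concerns STEP~2: from $|\varphi(x',h(x'))-\varphi(0)|\le C|x'|$ the term $\partial_{x_n}\bar u\,\bigl(\varphi(x',h(x'))-\varphi(0)\bigr)$ is only bounded by $C|x'|/(\varepsilon+|x'|^{m})$, so your asserted bounds $|\nabla\hat u|,|\Delta\hat u|\le C\bigl(|x'|^{m-1}/(\varepsilon+|x'|^{m})+1\bigr)\|\varphi\|_{C^{2}}$ do not follow: for $m>2$ the quantity $|x'|/(\varepsilon+|x'|^{m})$ exceeds $|x'|^{m-1}/(\varepsilon+|x'|^{m})+1$ in the intermediate region $|x'|\sim\varepsilon^{1/m}$ (the two coincide only when $m=2$, which is why this is invisible in Proposition \ref{prop1}). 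As written, repeating the argument verbatim delivers the bound with numerator $|x'|$ rather than $|x'|^{m-1}$, so reaching \eqref{v1+v2_bounded1m} in the stated form would need an additional argument that you do not supply; you cannot simply assert the $|x'|^{m-1}$ bound for $\nabla\hat u$.
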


Thus, using \eqref{nabla_v1m}, we have
$$\frac{1}{C}\int_{\Omega}\frac{1}{(\varepsilon+|x'|^{m})^{2}}dx\leq\,a_{11}\leq\,C\int_{\Omega}\frac{1}{(\varepsilon+|x'|^{m})^{2}}dx.$$
By direct integration we obtain the following estimates for $a_{11}$, insetad of Lemma \ref{lem_bly}.
\begin{lemma}\label{lem_blym}
For $n\geq2$ and $m\geq2$,
$$
\frac{1}{C\rho_{n}^{m}(\varepsilon)}\leq\,a_{11}\leq\frac{C}{\rho_{n}^{m}(\varepsilon)},\quad\mbox{where}~~\rho_{n}^{m}(\varepsilon)=
\begin{cases}
\varepsilon^{1-\frac{n-1}{m}},&\mbox{if}~~n-1<m,\\
\frac{1}{|\ln\varepsilon|},&\mbox{if}~~n-1=m,\\
1,&\mbox{if}~~n-1>m.
\end{cases}
$$
\end{lemma}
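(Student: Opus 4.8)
\textbf{Proof proposal for Lemma \ref{lem_blym}.}
The plan is to reduce the bound on $a_{11}$ to a direct computation of the integral
$\int_{\Omega}\frac{dx}{(\varepsilon+|x'|^m)^2}$, exactly as in the proof of Lemma \ref{lem_bly}, but now tracking the exponent $m$ in place of $2$. First I would split the domain of integration into the ``thin neck'' region $\Omega_{R_0}$, where the two boundaries are described by the graphs $x_n=h(x')$ and $x_n=\varepsilon+h_1(x')$, and the complement $\Omega\setminus\Omega_{R_0}$, on which the integrand is bounded by a universal constant and hence contributes at most $C$; this term is negligible compared with the main term in all the cases below (since $\rho_n^m(\varepsilon)\to0$ or stays of order $1$).

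On $\Omega_{R_0}$ I would use Fubini in the $x_n$-variable: since for fixed $x'$ the vertical segment has length $\delta(x')$, which by \eqref{h1h_convex1} satisfies $\frac{1}{C}(\varepsilon+|x'|^m)\le\delta(x')\le C(\varepsilon+|x'|^m)$, the $n$-dimensional integral reduces up to universal constants to the $(n-1)$-dimensional integral
\[
\int_{|x'|<R_0}\frac{\delta(x')}{(\varepsilon+|x'|^m)^2}\,dx'
\;\sim\;
\int_{|x'|<R_0}\frac{dx'}{\varepsilon+|x'|^m}.
\]
Passing to polar coordinates in $x'\in\mathbb{R}^{n-1}$, this becomes $c_n\int_0^{R_0}\frac{s^{n-2}}{\varepsilon+s^m}\,ds$. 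The behaviour of this one-dimensional integral as $\varepsilon\to0$ is governed by comparing the exponent $n-2$ of the numerator with the exponent $m-1$ of the ``critical scale'' $s\sim\varepsilon^{1/m}$: substituting $s=\varepsilon^{1/m}t$ gives $c_n\varepsilon^{\frac{n-1}{m}-1}\int_0^{R_0\varepsilon^{-1/m}}\frac{t^{n-2}}{1+t^m}\,dt$, and the tail integral $\int^{\infty}\frac{t^{n-2}}{1+t^m}\,dt$ converges iff $m-(n-2)>1$, i.e. $m>n-1$; is logarithmically divergent iff $m=n-1$; and is power-divergent (of order $(R_0\varepsilon^{-1/m})^{n-1-m}$) iff $m<n-1$. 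Collecting the three cases yields $\int_0^{R_0}\frac{s^{n-2}}{\varepsilon+s^m}ds\sim\varepsilon^{1-\frac{n-1}{m}}$ when $n-1<m$, $\sim|\ln\varepsilon|$ when $n-1=m$, and $\sim1$ when $n-1>m$, which is exactly $\frac{1}{\rho_n^m(\varepsilon)}$ as claimed; combining with the two-sided bound from Proposition \ref{prop2} gives the asserted bounds on $a_{11}$.

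There is no serious obstacle here: the only point requiring a little care is the bookkeeping in the three regimes, in particular checking that the ``outer'' contribution of order $C$ is genuinely dominated by $1/\rho_n^m(\varepsilon)$ (it is, since $\rho_n^m(\varepsilon)\le C$ in every case), and that the lower bound matches the upper bound, which it does because the substitution $s=\varepsilon^{1/m}t$ is a sharp change of scale and the integrand is nonnegative, so both inequalities in Proposition \ref{prop2} propagate through the monotone operations of integration and polar-coordinate change. One should also note that when $n=2$ we have $n-1=1\le m$, so only the first two sub-cases ($m>1$ giving $\varepsilon^{1-1/m}$, and $m=1$ excluded since $m\ge2$) occur, consistently recovering $\rho_2^m(\varepsilon)=\varepsilon^{1-1/m}$ and in particular $\rho_2^2(\varepsilon)=\sqrt\varepsilon$ of Lemma \ref{lem_bly}.
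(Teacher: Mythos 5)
Your argument is correct and is essentially the paper's own proof: the paper bounds $a_{11}$ above and below by $\int_{\Omega}(\varepsilon+|x'|^{m})^{-2}dx$ using \eqref{nabla_v1m} and then evaluates this ``by direct integration,'' which is precisely your Fubini--polar-coordinates--rescaling computation with the three regimes $m\gtrless n-1$. The only blemish is a sign slip in your summary line: when $n-1<m$ the integral is $\sim\varepsilon^{\frac{n-1}{m}-1}$ (not $\varepsilon^{1-\frac{n-1}{m}}$), which is indeed $1/\rho_{n}^{m}(\varepsilon)$, exactly as your preceding rescaled display correctly gives.
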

Hence, we have the following more general theorem.
\begin{theorem}\label{thm3}
Let $D_{1}\subset{D}\subset\mathbb{R}^{n}$ ($n\geq2$) be defined as above, under the assumptions \eqref{h1h_convex1} and \eqref{h1h_convex2}, instead of \eqref{h1h2}.  Let
$u\in{H}^{1}(D)\cap{C}^{1}(D\setminus\overline{D}_{1})$
be the solution to \eqref{equinfty2}. Then for $0<\varepsilon<1/2$, we have
\begin{align}\label{upperbound1m}
|\nabla{u}(x)|
\leq\frac{C\rho_{n}^{m}(\varepsilon)\left|Q[\varphi]\right|}{\varepsilon+\mathrm{dist}^{m}(x,\overline{PP_{1}})}+C
\left(\frac{\mathrm{dist}^{m-1}(x,\overline{PP_{1}})}{\varepsilon+\mathrm{dist}^{m}(x,\overline{PP_{1}})}+1\right)\|\varphi\|_{C^{2}(\partial{D})},\quad\,x\in{D}\setminus{D}_{1},
\end{align}
and if $|Q[\varphi]|\geq\,c^{*}$ for some universal constant $c^{*}>0$, then
\begin{equation}\label{lowerbound1m}
|\nabla{u}(x)|\geq
\frac{\rho_{n}^{m}(\varepsilon)\left|Q[\varphi]\right|}{C\varepsilon}, \qquad\,x\in\overline{PP_{1}},
\end{equation}
where $Q[\varphi]$
is defined by \eqref{def_Qphi}.
\end{theorem}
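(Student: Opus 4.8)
The plan is to reduce Theorem \ref{thm3} to two ingredients that have already been (or will be) established: the pointwise gradient bounds of Proposition \ref{prop2} for $v_1$ and $v_0$ under the relaxed convexity hypotheses \eqref{h1h_convex1}--\eqref{h1h_convex2}, and the energy estimate for $a_{11}$ in Lemma \ref{lem_blym}. The argument then mirrors, almost verbatim, the proof of Theorem \ref{thm1} given in Section \ref{sec_thm1}. First I would recall the decomposition \eqref{decomposition_u}, $u = (C_1-\varphi(P))v_1 + v_0 + \varphi(P)$, so that $\nabla u = (C_1-\varphi(P))\nabla v_1 + \nabla v_0$; this holds independently of the shape hypotheses since it only uses linearity and the definitions of $v_0,v_1$. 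As in \eqref{C1C2}, the trace theorem together with the uniform $H^1$ bound on $u$ gives $|C_1|\le C$.

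Next I would determine the constant $C_1-\varphi(P)$ exactly. Plugging the decomposition into the flux condition (third line of \eqref{equinfty2}) yields $(C_1-\varphi(P))\int_{\partial D_1}\partial_\nu v_1 + \int_{\partial D_1}\partial_\nu v_0 = 0$, and by Green's identity $\int_{\partial D_1}\partial_\nu v_1 = \int_{\partial D_1}(\partial_\nu v_1)v_1 = -\int_\Omega|\nabla v_1|^2 = -a_{11}$, exactly as in \eqref{aij}. Hence $C_1-\varphi(P) = Q[\varphi]/a_{11}$ and therefore
\begin{equation*}
\nabla u = \frac{Q[\varphi]}{a_{11}}\nabla v_1 + \nabla v_0.
\end{equation*}
For the upper bound \eqref{upperbound1m}, I would insert the upper bounds $|\nabla v_1(x)|\le C/(\varepsilon+|x'|^m)$ for $x\in\Omega_{R_0}$ and $|\nabla v_1|\le C$ outside (from \eqref{nabla_v1m}), the bound $|\nabla v_0(x)|\le C\big(|x'|^{m-1}/(\varepsilon+|x'|^m)+1\big)\|\varphi\|_{C^2}$ from \eqref{v1+v2_bounded1m}, and the lower bound $a_{11}\ge 1/(C\rho_n^m(\varepsilon))$ from Lemma \ref{lem_blym}, which gives $|Q[\varphi]|/a_{11}\le C\rho_n^m(\varepsilon)|Q[\varphi]|$. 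Writing $|x'|$ in terms of $\mathrm{dist}(x,\overline{PP_1})$ near the contact point (they are comparable by \eqref{h1h1}), these combine to give \eqref{upperbound1m}; away from $\overline{PP_1}$ the bound is dominated by the ``$+1$'' term, which is consistent with the stated estimate. For the lower bound \eqref{lowerbound1m}, restrict to $x\in\overline{PP_1}$, i.e. $x'=0'$: there $|\nabla v_1(x)|\ge 1/(C\varepsilon)$ by \eqref{nabla_v1m}, while $|\nabla v_0(x)|\le C\|\varphi\|_{C^2}$ by \eqref{v1+v2_bounded1m} since $|x'|^{m-1}/(\varepsilon+|x'|^m)$ vanishes at $x'=0'$; hence, assuming $|Q[\varphi]|\ge c^*$, the term $\frac{Q[\varphi]}{a_{11}}\nabla v_1$ has magnitude at least $\frac{c^*}{C}\cdot\frac{\rho_n^m(\varepsilon)}{\varepsilon}$, which for small $\varepsilon$ dominates the bounded $\nabla v_0$ term, yielding \eqref{lowerbound1m}.

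The only genuinely new content behind Theorem \ref{thm3}, compared to Theorem \ref{thm1}, is Proposition \ref{prop2} and Lemma \ref{lem_blym}; the theorem itself is then a clean bookkeeping argument. Accordingly, the main obstacle is not in this final assembly but upstream: verifying that the auxiliary function $\bar u$ defined via \eqref{ubar} still satisfies, under the weaker hypotheses \eqref{h1h_convex1}--\eqref{h1h_convex2}, the gradient and Laplacian bounds $|\partial_{x_i}\bar u|\le C|x'|^{m-1}/(\varepsilon+|x'|^m)$, $\partial_{x_n}\bar u = 1/\delta(x')$, $|\Delta\bar u|\le C/(\varepsilon+|x'|^m)$, and then rerunning the iteration scheme of STEP 1.2 with the dyadic radii $\sqrt{\varepsilon}$ replaced by $\varepsilon^{1/m}$ and the rescaling of STEP 1.3 adapted to the degeneracy exponent $m$. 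Once those are in place, the direct integration giving $a_{11}\sim 1/\rho_n^m(\varepsilon)$ is routine, and so is the deduction above. I would therefore present the proof of Theorem \ref{thm3} as a short paragraph: state the decomposition, compute $C_1-\varphi(P)=Q[\varphi]/a_{11}$, invoke Proposition \ref{prop2} and Lemma \ref{lem_blym}, and conclude \eqref{upperbound1m} and \eqref{lowerbound1m} exactly as in the proof of Theorem \ref{thm1}.
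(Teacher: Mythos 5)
Your proposal is correct and follows essentially the same route as the paper: Theorem \ref{thm3} is obtained by repeating the proof of Theorem \ref{thm1} verbatim---decomposition \eqref{decomposition_u}, the flux identity giving $C_{1}-\varphi(P)=Q[\varphi]/a_{11}$---with Proposition \ref{prop2} (the $m$-version of the pointwise bounds for $\nabla v_{1},\nabla v_{0}$) and Lemma \ref{lem_blym} replacing Proposition \ref{prop1} and Lemma \ref{lem_bly}. You also correctly identify that the only genuinely new work lies upstream, in rerunning the auxiliary-function and iteration estimates under \eqref{h1h_convex1}--\eqref{h1h_convex2}, which is exactly how the paper treats it.
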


\section{Proof of Theorem \ref{thm2}}\label{extend}

Following the approach developed in the proof of Theorem \ref{thm1}, we construct an auxiliary function
$\tilde{u}\in{C}^{2}(\mathbb{R}^{n})$, such that $\tilde{u}=1$ on
$\partial{D}_{1}$, $\tilde{u}=0$ on
$\partial{D}$,
\begin{align}\label{utilde}
\tilde{u}(x)
=\bar{u}(x)+\frac{\sum_{i=1}^{n-1}A_{ni}(x)\partial_{x_{i}}(h_{1}-h)(x')}{4A_{nn}(x)}\left(\Big(\frac{2x_{n}-(\varepsilon+h_{1}(x')+h(x'))}{\varepsilon+h_{1}(x')-h(x')}\Big)^{2}-1\right),\quad\mbox{in}~~\Omega_{R_{0}},
\end{align}
and
\begin{equation}\label{nablau_tilde_outside}
\|\tilde{u}\|_{C^{2}(\Omega\setminus \Omega_{R_{0}})}\leq\,C.
\end{equation}
Using the assumptions on $h_{1}$ and $h$, \eqref{h1h0}--\eqref{h1h3}, a direct calculation still gives
\begin{equation}\label{nablau_tilde}
\frac{1}{C(\varepsilon+|x'|^{2})}\leq\left|\nabla\tilde{u}(x)\right|\leq\frac{C}{\varepsilon+|x'|^{2}},\quad~x\in\Omega_{R_{0}}.
\end{equation}
More importantly, thanks to the corrector term in \eqref{utilde}, we obtain the following bound
\begin{equation}\label{f_tilde}
\left|\partial_{i}(A_{ij}(x)\partial_{j}\tilde{u}(x))\right|\leq\frac{C}{\varepsilon+|x'|^{2}},\quad~x\in\Omega_{R_{0}},
\end{equation}
the same as \eqref{nabla2u_bar}. This is the point, which plays an important role in the proof of the following Proposition.

\begin{prop}\label{prop2}
Assume the above, let $V_{0}, V_{1}\in{H}^1(\Omega)$ be the
weak solution of \eqref{section3 equ_v0} and \eqref{section3 equ_v1}, respectively. Then
\begin{equation}\label{nabla_w_2}
\|\nabla(V_{1}-\tilde{u})\|_{L^{\infty}(\Omega)}\leq\,C,
\end{equation}
and
\begin{equation}\label{nabla_v1+v2_2}
|\nabla V_{0}(x)|\leq\,C\left(\frac{|x'|}{\varepsilon+|x'|^{2}}+1\right)\|\varphi\|_{C^{2}(\partial{D})},\quad\,x\in\Omega.
\end{equation}
Consequently,
\begin{equation}\label{nabla_v12}
\frac{1}{C(\varepsilon+|x'|^{2})}\leq|\nabla{V}_{1}(x)|\leq\frac{C}{\varepsilon+|x'|^{2}},\quad\,x\in\Omega_{R_{0}},
\end{equation}
and
$$
|\nabla{V}_{1}(x)|\leq\,C,\quad\quad\ \,x\in \Omega\setminus \Omega_{R_{0}}.
$$
\end{prop}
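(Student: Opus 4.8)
\textbf{Proof proposal for Proposition~\ref{prop2}.}
The plan is to mimic the three-step architecture of the proof of Proposition~\ref{prop1}, replacing the Laplacian throughout by the divergence-form operator $\mathcal L u:=\partial_i(A_{ij}\partial_j u)$ and replacing the auxiliary function $\bar u$ by the corrected function $\tilde u$ of \eqref{utilde}. First I would set $W_1:=V_1-\tilde u$, which by \eqref{section3 equ_v1} solves $\mathcal L W_1=-\mathcal L\tilde u$ in $\Omega$ with $W_1=0$ on $\partial\Omega$. Outside $\Omega_{R_0}$, the bound \eqref{nablau_tilde_outside} together with interior and boundary $C^{1,\alpha}$ (or $W^{2,p}$) estimates for uniformly elliptic equations with $C^2$ coefficients gives $|W_1|+|\nabla W_1|\le C$ on $\Omega\setminus\Omega_{R_0}$, so the whole issue is to prove $\|\nabla W_1\|_{L^\infty(\Omega_{R_0})}\le C$. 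Crucially, the corrector term in \eqref{utilde} was designed precisely so that \eqref{f_tilde} holds, i.e. $|\mathcal L\tilde u(x)|\le C(\varepsilon+|x'|^2)^{-1}$ in $\Omega_{R_0}$, which is the exact analogue of \eqref{nabla2u_bar}. With this pointwise bound on the inhomogeneity in hand, the remaining argument is structurally identical to STEP 1 of the proof of Proposition~\ref{prop1}.

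The energy steps then go through verbatim, up to harmless changes of constants. In STEP~1.1, multiplying $\mathcal L W_1=-\mathcal L\tilde u$ by $W_1$ and integrating by parts, using ellipticity $\lambda|\nabla W_1|^2\le A_{ij}\partial_iW_1\partial_jW_1$ on the left, the maximum-principle bound $\|W_1\|_{L^\infty(\Omega)}\le C$ (which still holds since $0<V_1<1$ by the maximum principle for $\mathcal L$), and $\int_{\Omega_{R_0}}|\mathcal L\tilde u|\le C$ from \eqref{f_tilde}, one gets $\int_\Omega|\nabla W_1|^2\le C$. In STEP~1.2, the Caccioppoli-type inequality \eqref{FsFt11} becomes $\int_{\widehat\Omega_t(z')}|\nabla W_1|^2\le \frac{C}{(s-t)^2}\int_{\widehat\Omega_s(z')}|W_1|^2+(s-t)^2\int_{\widehat\Omega_s(z')}|\mathcal L\tilde u|^2$ — the only difference from the harmonic case is that the cross-term $\int A_{ij}\partial_i\tilde u\,\partial_j(W_1\eta^2)$ is absorbed in the same way using $|A_{ij}|\le\Lambda$. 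Since $|\mathcal L\tilde u|$ satisfies exactly the same bound as $|\Delta\bar u|$, the two-case iteration (for $|z'|\le\sqrt\varepsilon$ and for $\sqrt\varepsilon<|z'|\le R_0$) produces \eqref{energy_w_inomega_z1} unchanged. In STEP~1.3, after the rescaling \eqref{changeofvariant} the coefficients $\hat A_{ij}(y):=A_{ij}(\delta y'+z',\delta y_n)$ remain uniformly elliptic with the same $\lambda,\Lambda$ and have $C^2$ norms that are in fact small (they scale like $\delta$), so the $W^{2,p}$ estimates and Sobolev embedding on the nearly-flat cylinder $Q_1'$ apply with universal constants; this yields $\|\nabla W_1\|_{L^\infty(\Omega_{R_0})}\le C$ and hence \eqref{nabla_w_2}. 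Estimate \eqref{nabla_v1+v2_2} is proved by the same device: introduce $\tilde u_0$ built from $(1-\tilde u)(\varphi(x',h(x'))-\varphi(0))$ (the analogue of $\hat u$ in \eqref{uhat}), check that $|\mathcal L\tilde u_0|\le C(|x'|/(\varepsilon+|x'|^2)+1)\|\varphi\|_{C^2}$ — which is even better than \eqref{f_tilde} near $x'=0$ because of the extra factor $|x'|$ — and rerun STEPS~1.1--1.3. Finally, \eqref{nabla_v12} follows by combining \eqref{nabla_w_2} with the two-sided bound \eqref{nablau_tilde} on $|\nabla\tilde u|$, exactly as \eqref{nabla_v1} followed from \eqref{nabla_w_i0} and \eqref{nablau_bar}, and the bound on $\Omega\setminus\Omega_{R_0}$ is immediate from \eqref{nabla_w} (its analogue) and \eqref{nablau_tilde_outside}.

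The one genuinely new point — and the only place where the argument is not a transcription of Section~\ref{sec_thm1} — is the verification of \eqref{nablau_tilde} and \eqref{f_tilde}, i.e. that the explicit corrector in \eqref{utilde} simultaneously (a) does not destroy the two-sided gradient bound that $\bar u$ already enjoys and (b) cancels the ``bad'' part of $\mathcal L\bar u$. For (a): the corrector equals $\frac{\sum_i A_{ni}\partial_{x_i}(h_1-h)}{4A_{nn}}(\theta^2-1)$ where $\theta(x):=\frac{2x_n-(\varepsilon+h_1+h)}{\delta(x')}\in[-1,1]$; since $|\partial_{x_i}(h_1-h)(x')|\le C|x'|$ and $|A_{ni}/A_{nn}|\le\Lambda/\lambda$, the corrector is $O(|x'|)$, and its $x_n$-derivative is $O(|x'|/\delta)=O(|x'|/(\varepsilon+|x'|^2))$, which is of lower order than $\partial_{x_n}\bar u=1/\delta$; hence $|\nabla\tilde u|$ still has the same size as $|\nabla\bar u|$, giving \eqref{nablau_tilde}. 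For (b): the leading singular contribution to $\mathcal L\bar u=\partial_i(A_{ij}\partial_j\bar u)$ of size $\delta^{-2}$ comes from $A_{nn}\partial_{x_nx_n}\bar u$ plus the cross terms $2A_{ni}\partial_{x_ix_n}\bar u$; since $\partial_{x_nx_n}\bar u=0$ for the affine-in-$x_n$ function $\bar u$, the worst term is $\sum_{i<n}2A_{ni}\partial_{x_i}\partial_{x_n}\bar u\sim -\sum_i 2A_{ni}\partial_{x_i}(h_1-h)/\delta^2$, and a direct computation shows the $x_n x_n$-second derivative of the correction term equals $\frac{\sum_i A_{ni}\partial_{x_i}(h_1-h)}{4A_{nn}}\cdot\frac{8}{\delta^2}\cdot A_{nn}$, i.e. it is tuned to cancel exactly this $\delta^{-2}$ term; the remaining derivatives of the corrector are $O(\delta^{-1})$ or better, so $|\mathcal L\tilde u|\le C\delta^{-1}\cdot\delta^{-1}\cdot|x'|+C\delta^{-1}\le C(\varepsilon+|x'|^2)^{-1}$, which is \eqref{f_tilde}. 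I expect this cancellation computation to be the main obstacle — it is elementary but requires careful bookkeeping of which terms are $O(\delta^{-2})$, $O(|x'|\delta^{-2})=O(\delta^{-1})$, and $O(\delta^{-1})$ — after which the rest of Proposition~\ref{prop2} is a routine repetition of Proposition~\ref{prop1} with the Laplacian replaced by $\mathcal L$ and with ellipticity used wherever positivity of the quadratic form was previously automatic.
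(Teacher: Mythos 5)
Your proposal is correct and follows essentially the same route as the paper: the same difference functions $\widetilde w_1=V_1-\tilde u$ and $\widetilde w_0=V_0-(1-\tilde u)(\varphi(x',h(x'))-\varphi(0))$, the same global energy bound, Caccioppoli-plus-iteration local energy estimate, and rescaled $W^{2,p}$/Sobolev step, with ellipticity \eqref{ene01a} replacing the positivity that was automatic for the Laplacian, and with \eqref{nabla_v12} deduced from \eqref{nabla_w_2} and \eqref{nablau_tilde}. Your explicit verification of the cancellation behind \eqref{nablau_tilde} and \eqref{f_tilde} is the computation the paper only asserts as ``a direct calculation,'' and your bookkeeping of the $O(\delta^{-2})$ versus $O(\delta^{-1})$ terms is consistent with it.
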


\begin{proof}[Proof of Proposition \ref{prop2}]

\noindent\textbf{STEP 1.} Proof of \eqref{nabla_w_2}.

Let
$$\widetilde{w}_{1}=V_{1}-\tilde{u}.$$
Similarly, instead of \eqref{w20}, we have
\begin{equation}\label{divergence w1}
\begin{cases}
-\displaystyle\partial_{i}(A_{ij}(x)\partial_{j} \widetilde{w}_{1})=\displaystyle\partial_{i}(A_{ij}(x)\partial_{j} \tilde{u})=:\tilde{f},\quad&\mbox{in }\Omega,\\
\widetilde{w}_{1}=0, &\mbox{on }\partial \Omega.
\end{cases}
\end{equation}
By the standard elliptic theory,
\begin{equation}\label{nabla_wtilde_out}
|\widetilde{w}_{1}|+\left|\nabla\widetilde{w}_{1}\right|\leq\,C,
\quad\mbox{in}~~ \Omega\setminus\Omega_{R_{0}}.
\end{equation}
On the other hand, by the maximum principle, we have
\begin{equation}\label{wtilde_bdd}
\|\widetilde{w}_{1}\|_{L^{\infty}(\Omega)}\leq\,C.
\end{equation}

\textbf{STEP 1.1. Boundedness of the energy.}
Multiplying the equation in (\ref{divergence w1}) by $\tilde{w}_{1}$, integrating by parts, using (\ref{ene01a}), (\ref{wtilde_bdd}) and \eqref{f_tilde}, we have
\begin{align*}%\label{energy_w_1}
\lambda\int_{\Omega}|\nabla \widetilde{w}_{1}|^{2}dx
\leq\int_{\Omega}A_{ij}\partial_{i}\widetilde{w}_{1}\partial_{j}\widetilde{w}_{1}dx
=\,\int_{\Omega}\tilde{f}\,\widetilde{w}_{1}dx
\leq\,\|\widetilde{w}_{1}\|_{L^{\infty}(\Omega)}\left(\int_{\Omega_{R_{0}}}|\tilde{f}|dx+C\right)\leq\,C.
\end{align*}
So that
\begin{equation}\label{energy_w_1}
\int_{\Omega}\left|\nabla \widetilde{w}_{1}\right|^{2}dx\leq\,C.
\end{equation}

\textbf{STEP 1.2. Local energy estimates.}
Multiplying the equation in (\ref{divergence w1}) by $\eta^2\widetilde{w}_{1}$, where $\eta$ is the same cutoff function defined before, and integrating by parts, we deduce
\[\int_{\widehat{\Omega}_{s}(z^{\prime})}\displaystyle A_{ij}\partial_{j} \widetilde{w}_{1} \partial_{i}(\eta^2\widetilde{w}_{1})dx=\int_{\widehat{\Omega}_{s}(z^{\prime})}\displaystyle \tilde{f}\eta^2\widetilde{w}_{1}dx.\]
Then
\begin{equation*}
\begin{aligned}
\int_{\widehat{\Omega}_{s}(z^{\prime})}\displaystyle A_{ij}(\eta\partial_{j}\widetilde{w}_{1})(\eta \partial_{i}\widetilde{w}_{1})dx
&=\int_{\widehat{\Omega}_{s}(z^{\prime})}\bigg[-2\displaystyle A_{ij}(\eta\partial_{j}\widetilde{w}_{1})\widetilde{w}_{1}\partial_{i}\eta+\tilde{f}\eta^{2}\widetilde{w}_{1}\bigg]dx.
\end{aligned}
\end{equation*}
By \eqref{ene01a} and the Cauchy inequality,
\begin{eqnarray*}
\lambda\int_{\widehat{\Omega}_{s}(z^{\prime})}|\eta\nabla\widetilde{w}_{1}|^2dx\leq
\frac{\lambda}{4}\int_{\widehat{\Omega}_{s}(z^{\prime})}|\eta\nabla\widetilde{w}_{1}|^2dx+\frac{C}{(s-t)^{2}}\int_{\widehat{\Omega}_{s}(z^{\prime})}\widetilde{w}_{1}^{2}dx+C(s-t)^{2}\int_{\widehat{\Omega}_{s}(z^{\prime})}|\tilde{f}|^{2}dx.
\end{eqnarray*}
Thus
\begin{equation}\label{Ca-2}
\int_{\widehat{\Omega}_{t}(z^{\prime})}|\nabla \widetilde{w}_{1}|^2dx\leq\frac{C}{(s-t)^2}\int_{\widehat{\Omega}_{s}(z^{\prime})}|\widetilde{w}_{1}|^2dx+C(s-t)^{2}\int_{\widehat{\Omega}_{s}(z^{\prime})}|\tilde{f}|^2dx.
\end{equation}
Then using estimate \eqref{f_tilde}, instead of \eqref{nabla2u_bar}, we have
$$\int_{\widehat{\Omega}_{s}(z^{\prime})}|\tilde{f}|^2dx\leq
\begin{cases}
\frac{Cs^{n-1}}{\varepsilon},\quad&\mbox{if}~~|z'|\leq\sqrt{\varepsilon},\\
\frac{Cs^{n-1}}{|z'|^{2}},&\mbox{if}~~\sqrt{\varepsilon}<|z'|\leq\,R_{0}.
\end{cases}$$
Using the iteration argument, similar as step 1.2 in the proof of Proposition \ref{prop1}, we have $\widetilde{w}_{1}$ also satisfies \eqref{energy_w_inomega_z1}, that is,
\begin{equation*}%\label{energy_w 1_womega_z1}
\int_{\widehat{\Omega}_{\delta}(z')}\left|\nabla{\widetilde{w}_{1}}\right|^{2}dx\leq
\begin{cases}C\varepsilon^{n},&\mbox{if}~~|z'|\leq\sqrt{\varepsilon},\\
C|z'|^{2n},&\mbox{if}~~\sqrt{\varepsilon}<|z'|\leq\,R_{0}.
\end{cases}
\end{equation*}
Thus, similar as step 1.3 in the proof of Proposition \ref{prop1},  \eqref{nabla_w_2} is established.

\noindent{\bf STEP 2.} Proof of \eqref{nabla_v1+v2_2}.

Using $\tilde{u}$ instead of $\bar{u}$, we define a function $\hat{u}_{2}\in C^{2}(\mathbb R^{n})$, such that $\hat{u}_{2}=0$ on $\partial{D}_{1}$, $\hat{u}_{2}=\varphi(x)-\varphi(0\,)$ on $\partial{D}$,
\begin{align}\label{uhat}
\hat{u}_{2}(x)
=(1-\tilde{u}(x))\Big(\varphi(x^{\prime},h(x^{\prime}))-\varphi(0)\Big),\quad\mbox{in}~~\Omega_{R_{0}},
\end{align}
and
\begin{equation}\label{nablau_hat_outside}
\|\hat{u}_{2}\|_{C^{2}(\mathbb{R}^{n}\setminus \Omega_{R_{0}})}\leq\,C.
\end{equation}
Denote
\begin{equation*}
\widetilde {w}_{0}:= V_{0}-\hat{u}_{2}.
\end{equation*}
Instead of \eqref{w}, we have
\begin{equation}\label{p-laplace w}
\begin{cases}
-\displaystyle\partial_{i}(A_{ij}(x)\partial_{j} \widetilde {w}_{0})=\displaystyle\partial_{i}(A_{ij}(x)\partial_{j} \hat{u}_{2})=:\hat{f},\quad&\mbox{in }\Omega,\\
\widetilde {w}_{0}=0, &\mbox{on }\partial \Omega.
\end{cases}
\end{equation}

\textbf{STEP 2.1.} The boundedness of the energy is the same as step 1.1. By a direct computation, we have
\begin{equation}\label{f_hat}
\left|\hat{f}\right|\leq\,C\Big(\frac{|x'|}{\varepsilon+|x'|^{2}}+1\Big)\|\varphi\|_{C^{2}(\partial{D})},\quad~x\in\Omega_{R_{0}}.
\end{equation}
and
$$\int_{\widehat{\Omega}_{s}(z^{\prime})}|\hat{f}|^2dx\leq\,C\|\varphi\|_{C^{1}(\partial{D})}^{2}
\begin{cases}
\frac{s^{n-1}}{\varepsilon},\quad&\mbox{if}~~|z'|\leq\sqrt{\varepsilon},\\
\frac{s^{n-1}}{|z'|^{2}},&\mbox{if}~~\sqrt{\varepsilon}<|z'|\leq\,R_{0},
\end{cases}$$
similar as step 1.2 in the proof of Proposition \ref{prop1}, $\widetilde {w}_{0}$ also satisfies \eqref{energy_w_inomega_z1}. The rest is the same. Proposition \ref{prop2} is established.
\end{proof}

\begin{proof}[Proof of Theorem \ref{thm2}]
Similarly as in the proof of Theorem \ref{thm1}, we decompose the solution $u$ of \eqref{equinftydiv} as
\begin{equation}\label{section3 decomposition_u}
u(x)=(C_{1}-\varphi(P\,))V_{1}(x)+V_{0}(x)+\varphi(P\,),\quad\mbox{in}~~D\setminus\overline{D}_{1}.
\end{equation}
Define
$$a_{11}:=-\int_{\partial{D}_{1}}A_{ij}(x)\,\partial_{j}{V}_{1}\,\nu_{i}.
$$
By integrating by parts,
\begin{align*}
0=\int_{\Omega}\partial_{i}(A_{ij}(x)\partial_{j}V_{1})\cdot\,V_{1}
=&-\int_{\Omega}A_{ij}(x)\partial_{j}V_{1}\partial_{i}V_{1}-\int_{\partial{D}_{1}}A_{ij}(x)\,\partial_{j}{V}_{1}\,\nu_{i}\cdot\,1\\
=&-\int_{\Omega}A_{ij}(x)\partial_{i}V_{1}\partial_{j}V_{1}+a_{11}.
\end{align*}
That is,
$$a_{11}=\int_{\Omega}A_{ij}(x)\partial_{i}V_{1}\partial_{j}V_{1}.$$
By the uniform elliptic condition \eqref{ene01a}, and \eqref{nabla_v12},
$$\frac{1}{C}\int_{\Omega}\frac{1}{(\varepsilon+|x'|^{2})^{2}}\leq\lambda\int_{\Omega}|\nabla\,V_{1}|^{2}\leq\,a_{11}\leq\Lambda\int_{\Omega}|\nabla\,V_{1}|^{2}\leq\,{C}\int_{\Omega}\frac{1}{(\varepsilon+|x'|^{2})^{2}}.$$
So that Lemma \ref{lem_bly} holds still. Then, combining with Proposition \ref{prop2}, the proof of Theorem \ref{thm2} is completed.
\end{proof}

\noindent{\bf{\large Acknowledgements.}} The authors would like to express their gratitude to Professor Jiguang Bao and YanYan Li's encouragement and very helpful discussions. The first author was partially supported by  NSFC (11571042) (11371060) (11631002), Fok Ying Tung Education Foundation (151003) and the Fundamental Research Funds for the Central Universities.

%%%%%%%%%%%%%%%%%%%%%%%%%%%%%%%%%%%%%%%


\begin{thebibliography}{99}



\bibitem{akl} H. Ammari; H. Kang; M. Lim, Gradient estimates to the conductivity problem. Math.
Ann. 332 (2005), 277-286.

\bibitem{ackly} H. Ammari; G. Ciraolo; H. Kang; H. Lee; K. Yun, Spectral analysis of the Neumann-Poincar¨¦ operator and characterization of the stress concentration in anti-plane elasticity. Arch. Ration. Mech. Anal.  208  (2013),  275-304.


\bibitem{adkl}  H. Ammari; H. Dassios; H. Kang; M. Lim, Estimates for the electric field in the
presence of adjacent perfectly conducting spheres. Quat. Appl. Math. 65
(2007), 339-355.

\bibitem{aklll}  H. Ammari; H. Kang; H. Lee; J. Lee; M. Lim, Optimal estimates for the electrical
field in two dimensions. J. Math. Pures Appl. 88 (2007), 307-324.


\bibitem{akllz} H. Ammari; H. Kang; H. Lee; M. Lim; H. Zribi, Decomposition theorems and fine estimates for electrical
fields in the presence of closely located circular inclusions. J.
Differential Equations 247 (2009), 2897-2912.


\bibitem{basl} I. Babu\u{s}ka; B. Andersson; P. Smith; K.
Levin, Damage analysis of fiber composites. I. Statistical analysis
on fiber scale. Comput. Methods Appl. Mech. Engrg. 172 (1999), 27-77.


\bibitem{bly1} E. Bao; Y.Y. Li; B. Yin, Gradient estimates for the perfect conductivity problem. Arch. Ration. Mech. Anal. 193 (2009), 195-226.

\bibitem{bly2} E. Bao; Y.Y. Li; B. Yin, Gradient estimates for the perfect and insulated conductivity problems with multiple inclusions. Comm. Partial Differential Equations 35 (2010), 1982-2006.

\bibitem{bll} J.G. Bao; H.G. Li; Y.Y. Li, Gradient estimates for solutions of the Lam\'{e} system with partially infinite coefficients. Arch. Ration. Mech. Anal.  215  (2015),  no. 1, 307-351.

\bibitem{bll2} J.G. Bao; H.G. Li; Y.Y. Li, Gradient estimates for solutions of the Lam\'{e} system with partially infinite coefficients  in dimensions greater than two. Adv. Math. 305 (2017), 298-338.

\bibitem{bt1} E. Bonnetier; F. Triki, Pointwise bounds on the gradient and the spectrum of the Neumann-Poincar\'{e} operator: the case of 2 discs, Multi-scale and high-contrast PDE: from modeling, to mathematical analysis, to inversion, 81-91, Contemp. Math., 577, Amer. Math. Soc., Providence, RI, 2012.

\bibitem{bt2}E. Bonnetier; F. Triki, On the spectrum of the Poincar\'{e} variational problem for two close-to-touching inclusions in 2D. Arch. Ration. Mech. Anal. 209 (2013), no. 2, 541-567.

\bibitem{bv} E. Bonnetier; M. Vogelius, An elliptic regularity result for a composite medium
with ``touching'' fibers of circular cross-section. SIAM J. Math. Anal. 31 (2000), 651-677.


\bibitem{bc} B. Budiansky; G.F. Carrier, High shear stresses in stiff fiber composites, J. App. Mech. 51 (1984), 733-735.

\bibitem{dongli} H.J. Dong; H.G. Li, Optimal estimates for the conductivity problem by Green's function method. arXiv: 1606.02793v1. (2016)


\bibitem{dongzhang} H.J. Dong; H. Zhang, On an elliptic equation arising from composite materials. Arch. Ration. Mech. Anal. 222 (2016), no. 1, 47-89.

\bibitem{gt} D. Gilbarg; N.S. Trudinger, Elliptic partial differential equations of second order, Reprint of the 1998 edition, Classics in Mathematics. Springer, Berlin, 2001.

\bibitem{gn} Y. Gorb; A. Novikov, Blow-up of solutions to a p-Laplace equation, Multiscal Model. Simul. 10 (2012), 727-743.

\bibitem{kly0} H. Kang; H. Lee; K. Yun, Optimal estimates and asymptotics for the stress concentration between closely located stiff inclusions, Math. Ann. 363 (2015), 1281-1306.

\bibitem{kly}  H. Kang; M. Lim; K. Yun, Asymptotics and computation of the solution to the conductivity equation in the presence of adjacent inclusions with extreme conductivities. J. Math. Pures Appl. (9) 99 (2013), 234-249.

\bibitem{kly2}  H. Kang; M. Lim; K. Yun,  Characterization of the electric field concentration between two adjacent spherical perfect conductors.  SIAM J. Appl. Math. 74 (2014), 125-146.


\bibitem{k1} J.B. Keller, Conductivity of a medium containing a dense array of perfectly conducting spheres or cylinders or nonconducting cylinders, J. Appl. Phys., 34 (1963), pp. 991-993.


\bibitem{llby} H.G. Li; Y.Y. Li; E.S. Bao; B. Yin, Derivative estimates of solutions of elliptic systems in narrow regions. Quart. Appl. Math.  72  (2014),  no. 3, 589-596.


\bibitem{ln}  Y.Y. Li; L. Nirenberg, Estimates for elliptic system from composite material. Comm. Pure Appl. Math. 56 (2003), 892-925.


\bibitem{lv} Y.Y. Li; M. Vogelius, Gradient stimates for solutions to divergence form elliptic equations with discontinuous
coefficients. Arch. Rational Mech. Anal. 153 (2000), 91-151.

\bibitem{ly}M. Lim; K. Yun, Strong influence of a small fiber on shear stress in fiber-reinforced composites. J. Diff. Equa. 250 (2011), 2402-2439.

\bibitem{ly2} M. Lim; K. Yun, Blow-up of electric fields between closely spaced spherical perfect conductors, Comm. Partial Differential Equations, 34 (2009), pp. 1287-1315.

\bibitem{m} X. Markenscoff, Stress amplification in vanishingly small geometries. Computational Mechanics 19 (1996), 77-83.


\bibitem{y1} K. Yun, Estimates for electric fields blown up between closely adjacent conductors with arbitrary shape. SIAM J. Appl. Math. 67 (2007),  714-730.

\bibitem{y2} K. Yun, Optimal bound on high stresses occurring between stiff fibers with arbitrary shaped cross-sections. J. Math. Anal. Appl. 350 (2009), 306-312.

\bibitem{y3} H. Yun, An optimal estimate for electric fields on the shortest line segment between two spherical insulators in three dimensions. arXiv:1504.07679. (2015)

\end{thebibliography}
\end{document}